\documentclass[11pt,a4paper]{article}
\usepackage[UKenglish]{babel}
\usepackage[a4paper,margin=1in]{geometry}
\usepackage{adjustbox}
\usepackage{amsthm}
\usepackage{amsmath}
\usepackage{amssymb}
\usepackage{physics}
\usepackage{stmaryrd}
\usepackage{tikz}
\usepackage{tikz-cd}
\usepackage{tikzit}
\usepackage{circuitikz}
\usepackage{hyperref}
\hypersetup{colorlinks=true, unicode=true, linkcolor=[rgb]{0.10,0.05,0.67}, citecolor=[rgb]{0.10,0.05,0.67}, filecolor=[rgb]{0.10,0.05,0.67}, urlcolor=[rgb]{0.10,0.05,0.67}}
\usepackage{tubes}
\usepackage{xsavebox}
\usepackage{cmll}

\tikzcdset{scale cd/.style={every label/.append style={scale=#1},
    cells={nodes={scale=#1}}}}

\usetikzlibrary{shapes.multipart}

\tikzstyle{bwSpider}=[
       rectangle split,
       rectangle split parts=2,
       rectangle split part fill={black,white},
 minimum size=3.6 mm, inner sep=-2mm, draw=black,scale=0.5,rounded corners=0.8 mm
       ]
 \tikzstyle{wbSpider}=[
       rectangle split,
       rectangle split parts=2,
       rectangle split part fill={white,black},
 minimum size=3.6 mm, inner sep=-2mm, draw=black,scale=0.5,rounded corners=0.8 mm
       ]
\tikzstyle{cWire}=[densely dotted, thick]
\tikzstyle{env}=[copoint,regular polygon rotate=0,minimum width=0.2cm, fill=black]

\tikzstyle{probs}=[shape=semicircle,fill=white,draw=black,shape border rotate=180,minimum width=1.2cm]

\tikzstyle{every picture}=[baseline=-0.25em,scale=0.5]
\tikzstyle{dotpic}=[] 
\tikzstyle{diredges}=[every to/.style={diredge}]
\tikzstyle{math matrix}=[matrix of math nodes,left delimiter=(,right delimiter=),inner sep=2pt,column sep=1em,row sep=0.5em,nodes={inner sep=0pt},text height=1.5ex, text depth=0.25ex]

\tikzstyle{inline text}=[text height=1.5ex, text depth=0.25ex,yshift=0.5mm]
\tikzstyle{label}=[font=\footnotesize,text height=1.5ex, text depth=0.25ex,yshift=0.5mm]
\tikzstyle{left label}=[label,anchor=east,xshift=1.5mm]
\tikzstyle{right label}=[label,anchor=west,xshift=-1.5mm]

\tikzstyle{braceedge}=[decorate,decoration={brace,amplitude=2mm,raise=-1mm}]
\tikzstyle{small braceedge}=[decorate,decoration={brace,amplitude=1mm,raise=-1mm}]

\tikzstyle{doubled}=[line width=1.6pt] 
\tikzstyle{boldedge}=[doubled,shorten <=-0.17mm,shorten >=-0.17mm]
\tikzstyle{boldedgegray}=[doubled,gray,shorten <=-0.17mm,shorten >=-0.17mm]
\tikzstyle{singleedgegray}=[gray]

\tikzstyle{semidoubled}=[line width=1.4pt] 
\tikzstyle{semiboldedgegray}=[semidoubled,gray,shorten <=-0.17mm,shorten >=-0.17mm]

\tikzstyle{boxedge}=[semiboldedgegray]

\tikzstyle{dottededge}=[dashed,shorten <=-0.02mm,shorten >=-0.02mm]

\tikzstyle{boldedgedashed}=[very thick,dashed,shorten <=-0.17mm,shorten >=-0.17mm]
\tikzstyle{vboldedgedashed}=[doubled,dashed,shorten <=-0.17mm,shorten >=-0.17mm]
\tikzstyle{left hook arrow}=[left hook-latex]
\tikzstyle{right hook arrow}=[right hook-latex]
\tikzstyle{sembracket}=[line width=0.5pt,shorten <=-0.07mm,shorten >=-0.07mm]

\tikzstyle{causal edge}=[->,thick,gray]
\tikzstyle{causal nondir}=[thick,gray]
\tikzstyle{timeline}=[thick,gray, dashed]

\tikzstyle{cedge}=[<->,thick,gray!70!white]

\tikzstyle{empty diagram}=[draw=gray!40!white,dashed,shape=rectangle,minimum width=1cm,minimum height=1cm]
\tikzstyle{empty diagram small}=[draw=gray!50!white,dashed,shape=rectangle,minimum width=0.6cm,minimum height=0.5cm]

\tikzstyle{dot}=[inner sep=0mm,minimum width=2mm,minimum height=2mm,draw,shape=circle]

\tikzstyle{phase dot}=[pdot,phase dimensions]
\tikzstyle{wphase dot}=[dot, phase dimensions]

\tikzstyle{leak}=[white dot, shape=regular polygon, minimum size=300mm, regular polygon sides=3, outer sep=-0.2mm, regular polygon rotate=270]
\tikzstyle{preleak}=[trapezium, trapezium angle=67.5, draw, inner sep=0.1pt, outer sep=0pt, minimum height=2mm, minimum width=4pt,rotate=270]
\tikzstyle{proj}=[white dot, shape=regular polygon, minimum size=3.3 mm, regular polygon sides=4, outer sep=-0.2mm]
\tikzstyle{Vleak}=[white dot, shape=regular polygon, minimum size=3.3 mm, regular polygon sides=3, outer sep=-0.2mm, regular polygon rotate=90]
\tikzstyle{dleak}=[white dot, line width=1.6pt, shape=regular polygon, minimum size=3.3 mm, regular polygon sides=3, outer sep=-0.2mm, regular polygon rotate=270]

\tikzstyle{Wsquare}=[white dot, shape=regular polygon, rounded corners=0.8 mm, minimum size=3.3 mm, regular polygon sides=3, outer sep=-0.2mm]
\tikzstyle{Wsquareadj}=[white dot, shape=regular polygon, rounded corners=0.8 mm, minimum size=3.3 mm, regular polygon sides=3, outer sep=-0.2mm, regular polygon rotate=180]
\tikzstyle{ddot}=[inner sep=0mm, doubled, minimum width=2.5mm,minimum height=2.5mm,draw,shape=circle]

\tikzstyle{black dot}=[dot,fill=black]
\tikzstyle{white dot}=[dot,fill=white,,text depth=-0.2mm]
\tikzstyle{white Wsquare}=[Wsquare,fill=gray,,text depth=-0.2mm]
\tikzstyle{white Wsquareadj}=[Wsquareadj,fill=white,,text depth=-0.2mm]
\tikzstyle{green dot}=[white dot] 
\tikzstyle{gray dot}=[dot,fill=gray!40!white,,text depth=-0.2mm]
\tikzstyle{red dot}=[gray dot] 

\tikzstyle{black ddot}=[ddot,fill=black]
\tikzstyle{white ddot}=[ddot,fill=white]
\tikzstyle{gray ddot}=[ddot,fill=gray!40!white]

\tikzstyle{gray edge}=[gray!60!white]

\tikzstyle{small dot}=[inner sep=0.5mm,minimum width=0pt,minimum height=0pt,draw,shape=circle]

\tikzstyle{small black dot}=[small dot,fill=black]
\tikzstyle{small white dot}=[small dot,fill=white]
\tikzstyle{small gray dot}=[small dot,fill=gray!40!white]

\tikzstyle{causal dot}=[inner sep=0.4mm,minimum width=0pt,minimum height=0pt,draw=white,shape=circle,fill=gray!40!white]

\tikzstyle{phase dimensions}=[minimum size=5mm,font=\footnotesize,rectangle,rounded corners=2.5mm,inner sep=0.2mm,outer sep=-2mm]
\tikzstyle{dphase dimensions}=[minimum size=5mm,font=\footnotesize,rectangle,rounded corners=2.5mm,inner sep=0.2mm,outer sep=-2mm]

\tikzstyle{white phase dot}=[dot,fill=white,phase dimensions]
\tikzstyle{white phase ddot}=[ddot,fill=white,dphase dimensions]

\tikzstyle{white rect ddot}=[draw=black,fill=white,doubled,minimum size=5mm,font=\footnotesize,rectangle,rounded corners=2.5mm,inner sep=0.2mm]
\tikzstyle{gray rect ddot}=[draw=black,fill=gray!40!white,doubled,minimum size=6mm,font=\footnotesize,rectangle,rounded corners=3mm]

\tikzstyle{gray phase dot}=[dot,fill=gray!40!white,phase dimensions]
\tikzstyle{gray phase ddot}=[ddot,fill=gray!40!white,dphase dimensions]
\tikzstyle{grey phase dot}=[gray phase dot]
\tikzstyle{grey phase ddot}=[gray phase ddot]

\tikzstyle{small phase dimensions}=[minimum size=4mm,font=\tiny,rectangle,rounded corners=2mm,inner sep=0.2mm,outer sep=-2mm]
\tikzstyle{small dphase dimensions}=[minimum size=4mm,font=\tiny,rectangle,rounded corners=2mm,inner sep=0.2mm,outer sep=-2mm]

\tikzstyle{small gray phase dot}=[dot,fill=gray!40!white,small phase dimensions]
\tikzstyle{small gray phase ddot}=[ddot,fill=gray!40!white,small dphase dimensions]

\tikzstyle{small map}=[draw,shape=rectangle,minimum height=4mm,minimum width=4mm,fill=white]

\tikzstyle{cnot}=[fill=white,shape=circle,inner sep=-1.4pt]

\tikzstyle{asym hadamard}=[fill=white,draw,shape=NEbox,inner sep=0.6mm,font=\footnotesize,minimum height=4mm]
\tikzstyle{asym hadamard conj}=[fill=white,draw,shape=NWbox,inner sep=0.6mm,font=\footnotesize,minimum height=4mm]
\tikzstyle{asym hadamard dag}=[fill=white,draw,shape=SEbox,inner sep=0.6mm,font=\footnotesize,minimum height=4mm]

\tikzstyle{hadamard}=[fill=white,draw,inner sep=0.6mm,font=\footnotesize,minimum height=4mm,minimum width=4mm]
\tikzstyle{small hadamard}=[fill=white,draw,inner sep=0.6mm,minimum height=1.5mm,minimum width=1.5mm]
\tikzstyle{small hadamard rotate}=[small hadamard,rotate=45]
\tikzstyle{dhadamard}=[hadamard,doubled]
\tikzstyle{small dhadamard}=[small hadamard,doubled]
\tikzstyle{small dhadamard rotate}=[small hadamard rotate,doubled]
\tikzstyle{antipode}=[white dot,inner sep=0.3mm,font=\footnotesize]

\tikzstyle{scalar}=[diamond,draw,inner sep=0.5pt,font=\small]
\tikzstyle{dscalar}=[diamond,doubled, draw,inner sep=0.5pt,font=\small]

\tikzstyle{small box}=[rectangle,inline text,fill=white,draw,minimum height=5mm,yshift=-0.5mm,minimum width=5mm,font=\small]
\tikzstyle{small gray box}=[small box,fill=gray!30]
\tikzstyle{medium box}=[rectangle,inline text,fill=white,draw,minimum height=5mm,yshift=-0.5mm,minimum width=10mm,font=\small]
\tikzstyle{square box}=[small box] 
\tikzstyle{medium gray box}=[small box,fill=gray!30]
\tikzstyle{semilarge box}=[rectangle,inline text,fill=white,draw,minimum height=5mm,yshift=-0.5mm,minimum width=12.5mm,font=\small]
\tikzstyle{large box}=[rectangle,inline text,fill=white,draw,minimum height=5mm,yshift=-0.5mm,minimum width=15mm,font=\small]
\tikzstyle{large gray box}=[small box,fill=gray!30]

\tikzstyle{Bayes box}=[rectangle,fill=black,draw, minimum height=3mm, minimum width=3mm]

\tikzstyle{gray square point}=[small box,fill=gray!50]

\tikzstyle{dphase box white}=[dhadamard]
\tikzstyle{dphase box gray}=[dhadamard,fill=gray!50!white]
\tikzstyle{phase box white}=[hadamard]
\tikzstyle{phase box gray}=[hadamard,fill=gray!50!white]

\tikzstyle{point}=[regular polygon,regular polygon sides=3,draw,scale=0.75,inner sep=-0.5pt,minimum width=9mm,fill=white,regular polygon rotate=180]
\tikzstyle{point nosep}=[regular polygon,regular polygon sides=3,draw,scale=0.75,inner sep=-2pt,minimum width=9mm,fill=white,regular polygon rotate=180]
\tikzstyle{copoint}=[regular polygon,regular polygon sides=3,draw,scale=0.75,inner sep=-0.5pt,minimum width=9mm,fill=white]
\tikzstyle{dpoint}=[point,doubled]
\tikzstyle{dcopoint}=[copoint,doubled]

\tikzstyle{pointgrow}=[shape=cornerpoint,kpoint common,scale=0.75,inner sep=3pt]
\tikzstyle{pointgrow dag}=[shape=cornercopoint,kpoint common,scale=0.75,inner sep=3pt]

\tikzstyle{wide copoint}=[fill=white,draw,shape=isosceles triangle,shape border rotate=90,isosceles triangle stretches=true,inner sep=0pt,minimum width=1.5cm,minimum height=6.12mm]
\tikzstyle{wide point}=[fill=white,draw,shape=isosceles triangle,shape border rotate=-90,isosceles triangle stretches=true,inner sep=0pt,minimum width=1.5cm,minimum height=6.12mm,yshift=-0.0mm]
\tikzstyle{wide point plus}=[fill=white,draw,shape=isosceles triangle,shape border rotate=-90,isosceles triangle stretches=true,inner sep=0pt,minimum width=1.74cm,minimum height=7mm,yshift=-0.0mm]

\tikzstyle{wide dpoint}=[fill=white,doubled,draw,shape=isosceles triangle,shape border rotate=-90,isosceles triangle stretches=true,inner sep=0pt,minimum width=1.5cm,minimum height=6.12mm,yshift=-0.0mm]

\tikzstyle{tinypoint}=[regular polygon,regular polygon sides=3,draw,scale=0.55,inner sep=-0.15pt,minimum width=6mm,fill=white,regular polygon rotate=180]

\tikzstyle{white point}=[point]
\tikzstyle{white dpoint}=[dpoint]
\tikzstyle{green point}=[white point] 
\tikzstyle{white copoint}=[copoint]
\tikzstyle{gray point}=[point,fill=gray!40!white]
\tikzstyle{gray dpoint}=[gray point,doubled]
\tikzstyle{red point}=[gray point] 
\tikzstyle{gray copoint}=[copoint,fill=gray!40!white]
\tikzstyle{gray dcopoint}=[gray copoint,doubled]

\tikzstyle{white point guide}=[regular polygon,regular polygon sides=3,font=\scriptsize,draw,scale=0.65,inner sep=-0.5pt,minimum width=9mm,fill=white,regular polygon rotate=180]

\tikzstyle{black point}=[point,fill=black,font=\color{white}]
\tikzstyle{black copoint}=[copoint,fill=black,font=\color{white}]

\tikzstyle{tiny gray point}=[tinypoint,fill=gray!40!white]

\tikzstyle{diredge}=[->]
\tikzstyle{ddiredge}=[<->]
\tikzstyle{rdiredge}=[<-]
\tikzstyle{thickdiredge}=[->, very thick]
\tikzstyle{pointer edge}=[->,very thick,gray]
\tikzstyle{pointer edge part}=[very thick,gray]
\tikzstyle{dashed edge}=[dashed]
\tikzstyle{thick dashed edge}=[very thick,dashed]
\tikzstyle{thick gray dashed edge}=[thick dashed edge,gray!40]
\tikzstyle{thick map edge}=[very thick,|->]

\makeatletter
\newcommand{\boxshape}[3]{%
\pgfdeclareshape{#1}{
\inheritsavedanchors[from=rectangle] 
\inheritanchorborder[from=rectangle]
\inheritanchor[from=rectangle]{center}
\inheritanchor[from=rectangle]{north}
\inheritanchor[from=rectangle]{south}
\inheritanchor[from=rectangle]{west}
\inheritanchor[from=rectangle]{east}
\backgroundpath{
\southwest \pgf@xa=\pgf@x \pgf@ya=\pgf@y
\northeast \pgf@xb=\pgf@x \pgf@yb=\pgf@y

\@tempdima=#2
\@tempdimb=#3

\pgfpathmoveto{\pgfpoint{\pgf@xa - 5pt + \@tempdima}{\pgf@ya}}
\pgfpathlineto{\pgfpoint{\pgf@xa - 5pt - \@tempdima}{\pgf@yb}}
\pgfpathlineto{\pgfpoint{\pgf@xb + 5pt + \@tempdimb}{\pgf@yb}}
\pgfpathlineto{\pgfpoint{\pgf@xb + 5pt - \@tempdimb}{\pgf@ya}}
\pgfpathlineto{\pgfpoint{\pgf@xa - 5pt + \@tempdima}{\pgf@ya}}
\pgfpathclose
}
}}

\boxshape{NEbox}{0pt}{5pt}
\boxshape{SEbox}{0pt}{-5pt}
\boxshape{NWbox}{5pt}{0pt}
\boxshape{SWbox}{-5pt}{0pt}
\boxshape{EBox}{-3pt}{3pt}
\boxshape{WBox}{3pt}{-3pt}
\makeatother

\tikzstyle{cloud}=[shape=cloud,draw,minimum width=1.5cm,minimum height=1.5cm]

\tikzstyle{map}=[draw,shape=NEbox,inner sep=2pt,minimum height=6mm,fill=white]
\tikzstyle{dashedmap}=[draw,dashed,shape=NEbox,inner sep=2pt,minimum height=6mm,fill=white]
\tikzstyle{mapdag}=[draw,shape=SEbox,inner sep=2pt,minimum height=6mm,fill=white]
\tikzstyle{mapadj}=[draw,shape=SEbox,inner sep=2pt,minimum height=6mm,fill=white]
\tikzstyle{maptrans}=[draw,shape=SWbox,inner sep=2pt,minimum height=6mm,fill=white]
\tikzstyle{mapconj}=[draw,shape=NWbox,inner sep=2pt,minimum height=6mm,fill=white]

\tikzstyle{medium map}=[draw,shape=NEbox,inner sep=2pt,minimum height=6mm,fill=white,minimum width=7mm]
\tikzstyle{medium map dag}=[draw,shape=SEbox,inner sep=2pt,minimum height=6mm,fill=white,minimum width=7mm]
\tikzstyle{medium map adj}=[draw,shape=SEbox,inner sep=2pt,minimum height=6mm,fill=white,minimum width=7mm]
\tikzstyle{medium map trans}=[draw,shape=SWbox,inner sep=2pt,minimum height=6mm,fill=white,minimum width=7mm]
\tikzstyle{medium map conj}=[draw,shape=NWbox,inner sep=2pt,minimum height=6mm,fill=white,minimum width=7mm]
\tikzstyle{semilarge map}=[draw,shape=NEbox,inner sep=2pt,minimum height=6mm,fill=white,minimum width=9.5mm]
\tikzstyle{semilarge map trans}=[draw,shape=SWbox,inner sep=2pt,minimum height=6mm,fill=white,minimum width=9.5mm]
\tikzstyle{semilarge map adj}=[draw,shape=SEbox,inner sep=2pt,minimum height=6mm,fill=white,minimum width=9.5mm]
\tikzstyle{semilarge map dag}=[draw,shape=SEbox,inner sep=2pt,minimum height=6mm,fill=white,minimum width=9.5mm]
\tikzstyle{semilarge map conj}=[draw,shape=NWbox,inner sep=2pt,minimum height=6mm,fill=white,minimum width=9.5mm]
\tikzstyle{large map}=[draw,shape=NEbox,inner sep=2pt,minimum height=6mm,fill=white,minimum width=12mm]
\tikzstyle{large map conj}=[draw,shape=NWbox,inner sep=2pt,minimum height=6mm,fill=white,minimum width=12mm]
\tikzstyle{very large map}=[draw,shape=NEbox,inner sep=2pt,minimum height=6mm,fill=white,minimum width=17mm]

\tikzstyle{medium dmap}=[draw,doubled,shape=NEbox,inner sep=2pt,minimum height=6mm,fill=white,minimum width=7mm]
\tikzstyle{medium dmap dag}=[draw,doubled,shape=SEbox,inner sep=2pt,minimum height=6mm,fill=white,minimum width=7mm]
\tikzstyle{medium dmap adj}=[draw,doubled,shape=SEbox,inner sep=2pt,minimum height=6mm,fill=white,minimum width=7mm]
\tikzstyle{medium dmap trans}=[draw,doubled,shape=SWbox,inner sep=2pt,minimum height=6mm,fill=white,minimum width=7mm]
\tikzstyle{medium dmap conj}=[draw,doubled,shape=NWbox,inner sep=2pt,minimum height=6mm,fill=white,minimum width=7mm]
\tikzstyle{semilarge dmap}=[draw,doubled,shape=NEbox,inner sep=2pt,minimum height=6mm,fill=white,minimum width=9.5mm]
\tikzstyle{semilarge dmap trans}=[draw,doubled,shape=SWbox,inner sep=2pt,minimum height=6mm,fill=white,minimum width=9.5mm]
\tikzstyle{semilarge dmap adj}=[draw,doubled,shape=SEbox,inner sep=2pt,minimum height=6mm,fill=white,minimum width=9.5mm]
\tikzstyle{semilarge dmap dag}=[draw,doubled,shape=SEbox,inner sep=2pt,minimum height=6mm,fill=white,minimum width=9.5mm]
\tikzstyle{semilarge dmap conj}=[draw,doubled,shape=NWbox,inner sep=2pt,minimum height=6mm,fill=white,minimum width=9.5mm]
\tikzstyle{large dmap}=[draw,doubled,shape=NEbox,inner sep=2pt,minimum height=6mm,fill=white,minimum width=12mm]
\tikzstyle{large dmap conj}=[draw,doubled,shape=NWbox,inner sep=2pt,minimum height=6mm,fill=white,minimum width=12mm]
\tikzstyle{large dmap trans}=[draw,doubled,shape=SWbox,inner sep=2pt,minimum height=6mm,fill=white,minimum width=12mm]
\tikzstyle{large dmap adj}=[draw,doubled,shape=SEbox,inner sep=2pt,minimum height=6mm,fill=white,minimum width=12mm]
\tikzstyle{large dmap dag}=[draw,doubled,shape=SEbox,inner sep=2pt,minimum height=6mm,fill=white,minimum width=12mm]
\tikzstyle{very large dmap}=[draw,doubled,shape=NEbox,inner sep=2pt,minimum height=6mm,fill=white,minimum width=19.5mm]

\tikzstyle{muxbox}=[draw,shape=rectangle,minimum height=3mm,minimum width=3mm,fill=white]
\tikzstyle{dmuxbox}=[muxbox,doubled]

\tikzstyle{box}=[draw,shape=rectangle,inner sep=2pt,minimum height=6mm,minimum width=6mm,fill=white]
\tikzstyle{dbox}=[draw,doubled,shape=rectangle,inner sep=2pt,minimum height=6mm,minimum width=6mm,fill=white]
\tikzstyle{dmap}=[draw,doubled,shape=NEbox,inner sep=2pt,minimum height=6mm,fill=white]
\tikzstyle{dmapdag}=[draw,doubled,shape=SEbox,inner sep=2pt,minimum height=6mm,fill=white]
\tikzstyle{dmapadj}=[draw,doubled,shape=SEbox,inner sep=2pt,minimum height=6mm,fill=white]
\tikzstyle{dmaptrans}=[draw,doubled,shape=SWbox,inner sep=2pt,minimum height=6mm,fill=white]
\tikzstyle{dmapconj}=[draw,doubled,shape=NWbox,inner sep=2pt,minimum height=6mm,fill=white]

\tikzstyle{ddmap}=[draw,doubled,dashed,shape=NEbox,inner sep=2pt,minimum height=6mm,fill=white]
\tikzstyle{ddmapdag}=[draw,doubled,dashed,shape=SEbox,inner sep=2pt,minimum height=6mm,fill=white]
\tikzstyle{ddmapadj}=[draw,doubled,dashed,shape=SEbox,inner sep=2pt,minimum height=6mm,fill=white]
\tikzstyle{ddmaptrans}=[draw,doubled,dashed,shape=SWbox,inner sep=2pt,minimum height=6mm,fill=white]
\tikzstyle{ddmapconj}=[draw,doubled,dashed,shape=NWbox,inner sep=2pt,minimum height=6mm,fill=white]

\boxshape{sNEbox}{0pt}{3pt}
\boxshape{sSEbox}{0pt}{-3pt}
\boxshape{sNWbox}{3pt}{0pt}
\boxshape{sSWbox}{-3pt}{0pt}
\tikzstyle{smap}=[draw,shape=sNEbox,fill=white]
\tikzstyle{smapdag}=[draw,shape=sSEbox,fill=white]
\tikzstyle{smapadj}=[draw,shape=sSEbox,fill=white]
\tikzstyle{smaptrans}=[draw,shape=sSWbox,fill=white]
\tikzstyle{smapconj}=[draw,shape=sNWbox,fill=white]

\tikzstyle{dsmap}=[draw,dashed,shape=sNEbox,fill=white]
\tikzstyle{dsmapdag}=[draw,dashed,shape=sSEbox,fill=white]
\tikzstyle{dsmaptrans}=[draw,dashed,shape=sSWbox,fill=white]
\tikzstyle{dsmapconj}=[draw,dashed,shape=sNWbox,fill=white]

\boxshape{mNEbox}{0pt}{10pt}
\boxshape{mSEbox}{0pt}{-10pt}
\boxshape{mNWbox}{10pt}{0pt}
\boxshape{mSWbox}{-10pt}{0pt}
\tikzstyle{mmap}=[draw,shape=mNEbox]
\tikzstyle{mmapdag}=[draw,shape=mSEbox]
\tikzstyle{mmaptrans}=[draw,shape=mSWbox]
\tikzstyle{mmapconj}=[draw,shape=mNWbox]

\tikzstyle{mmapgray}=[draw,fill=gray!40!white,shape=mNEbox]
\tikzstyle{smapgray}=[draw,fill=gray!40!white,shape=sNEbox]

\makeatletter

\pgfdeclareshape{cornerpoint}{
\inheritsavedanchors[from=rectangle] 
\inheritanchorborder[from=rectangle]
\inheritanchor[from=rectangle]{center}
\inheritanchor[from=rectangle]{north}
\inheritanchor[from=rectangle]{south}
\inheritanchor[from=rectangle]{west}
\inheritanchor[from=rectangle]{east}
\backgroundpath{
\southwest \pgf@xa=\pgf@x \pgf@ya=\pgf@y
\northeast \pgf@xb=\pgf@x \pgf@yb=\pgf@y

\pgfmathsetmacro{\pgf@shorten@left}{\pgfkeysvalueof{/tikz/shorten left}}
\pgfmathsetmacro{\pgf@shorten@right}{\pgfkeysvalueof{/tikz/shorten right}}

\pgfpathmoveto{\pgfpoint{0.5 * (\pgf@xa + \pgf@xb)}{\pgf@ya - 5pt}}
\pgfpathlineto{\pgfpoint{\pgf@xa - 8pt + \pgf@shorten@left}{\pgf@yb - 1.5 * \pgf@shorten@left}}
\pgfpathlineto{\pgfpoint{\pgf@xa - 8pt + \pgf@shorten@left}{\pgf@yb}}
\pgfpathlineto{\pgfpoint{\pgf@xb + 8pt - \pgf@shorten@right}{\pgf@yb}}
\pgfpathlineto{\pgfpoint{\pgf@xb + 8pt - \pgf@shorten@right}{\pgf@yb - 1.5 * \pgf@shorten@right}}
\pgfpathclose
}
}

\pgfdeclareshape{cornercopoint}{
\inheritsavedanchors[from=rectangle] 
\inheritanchorborder[from=rectangle]
\inheritanchor[from=rectangle]{center}
\inheritanchor[from=rectangle]{north}
\inheritanchor[from=rectangle]{south}
\inheritanchor[from=rectangle]{west}
\inheritanchor[from=rectangle]{east}
\backgroundpath{
\southwest \pgf@xa=\pgf@x \pgf@ya=\pgf@y
\northeast \pgf@xb=\pgf@x \pgf@yb=\pgf@y

\pgfmathsetmacro{\pgf@shorten@left}{\pgfkeysvalueof{/tikz/shorten left}}
\pgfmathsetmacro{\pgf@shorten@right}{\pgfkeysvalueof{/tikz/shorten right}}

\pgfpathmoveto{\pgfpoint{0.5 * (\pgf@xa + \pgf@xb)}{\pgf@yb + 5pt}}
\pgfpathlineto{\pgfpoint{\pgf@xa - 8pt + \pgf@shorten@left}{\pgf@ya + 1.5 * \pgf@shorten@left}}
\pgfpathlineto{\pgfpoint{\pgf@xa - 8pt + \pgf@shorten@left}{\pgf@ya}}
\pgfpathlineto{\pgfpoint{\pgf@xb + 8pt - \pgf@shorten@right}{\pgf@ya}}
\pgfpathlineto{\pgfpoint{\pgf@xb + 8pt - \pgf@shorten@right}{\pgf@ya + 1.5 * \pgf@shorten@right}}
\pgfpathclose
}
}

\makeatother

\pgfkeyssetvalue{/tikz/shorten left}{0pt}
\pgfkeyssetvalue{/tikz/shorten right}{0pt}

\tikzstyle{kpoint common}=[draw,fill=white,inner sep=1pt,minimum height=4mm]
\tikzstyle{kpoint sc}=[shape=cornerpoint,kpoint common]
\tikzstyle{kpoint adjoint sc}=[shape=cornercopoint,kpoint common]
\tikzstyle{kpoint}=[shape=cornerpoint,shorten left=5pt,kpoint common]
\tikzstyle{kpoint adjoint}=[shape=cornercopoint,shorten left=5pt,kpoint common]
\tikzstyle{kpoint conjugate}=[shape=cornerpoint,shorten right=5pt,kpoint common]
\tikzstyle{kpoint transpose}=[shape=cornercopoint,shorten right=5pt,kpoint common]
\tikzstyle{kpoint symm}=[shape=cornerpoint,shorten left=5pt,shorten right=5pt,kpoint common]

\tikzstyle{wide kpoint sc}=[shape=cornerpoint,kpoint common, minimum width=1 cm]
\tikzstyle{wide kpointdag sc}=[shape=cornercopoint,kpoint common, minimum width=1 cm]

\tikzstyle{black kpoint}=[shape=cornerpoint,shorten left=5pt,kpoint common,fill=black,font=\color{white}]

\tikzstyle{black kpoint sm}=[shape=cornerpoint,shorten left=5pt,kpoint common,fill=black,font=\color{white},scale=0.75]

\tikzstyle{black kpoint adjoint}=[shape=cornercopoint,shorten left=5pt,kpoint common,fill=black,font=\color{white}]
\tikzstyle{black kpointadj}=[shape=cornercopoint,shorten left=5pt,kpoint common,fill=black,font=\color{white}]

\tikzstyle{black kpointadj sm}=[shape=cornercopoint,shorten left=5pt,kpoint common,fill=black,font=\color{white},scale=0.75]

\tikzstyle{black dkpoint}=[shape=cornerpoint,shorten left=5pt,kpoint common,fill=black, doubled,font=\color{white}]
\tikzstyle{black dkpoint adjoint}=[shape=cornercopoint,shorten left=5pt,kpoint common,fill=black, doubled,font=\color{white}]
\tikzstyle{black dkpointadj}=[shape=cornercopoint,shorten left=5pt,kpoint common,fill=black, doubled,font=\color{white}]

\tikzstyle{black dkpoint sm}=[shape=cornerpoint,shorten left=5pt,kpoint common,fill=black, doubled,font=\color{white},scale=0.75]
\tikzstyle{black dkpointadj sm}=[shape=cornercopoint,shorten left=5pt,kpoint common,fill=black, doubled,font=\color{white},scale=0.75]

\tikzstyle{kpointdag}=[kpoint adjoint]
\tikzstyle{kpointadj}=[kpoint adjoint]
\tikzstyle{kpointconj}=[kpoint conjugate]
\tikzstyle{kpointtrans}=[kpoint transpose]

\tikzstyle{big kpoint}=[kpoint, minimum width=1.2 cm, minimum height=8mm, inner sep=4pt, text depth=3mm]

\tikzstyle{wide kpoint}=[kpoint, minimum width=1 cm, inner sep=2pt]
\tikzstyle{wide kpointdag}=[kpointdag, minimum width=1 cm, inner sep=2pt]
\tikzstyle{wide kpointconj}=[kpointconj, minimum width=1 cm, inner sep=2pt]
\tikzstyle{wide kpointtrans}=[kpointtrans, minimum width=1 cm, inner sep=2pt]

\tikzstyle{wider kpoint}=[kpoint, minimum width=1.25 cm, inner sep=2pt]
\tikzstyle{wider kpointdag}=[kpointdag, minimum width=1.25 cm, inner sep=2pt]
\tikzstyle{wider kpointconj}=[kpointconj, minimum width=1.25 cm, inner sep=2pt]
\tikzstyle{wider kpointtrans}=[kpointtrans, minimum width=1.25 cm, inner sep=2pt]

\tikzstyle{gray kpoint}=[kpoint,fill=gray!50!white]
\tikzstyle{gray kpointdag}=[kpointdag,fill=gray!50!white]
\tikzstyle{gray kpointadj}=[kpointadj,fill=gray!50!white]
\tikzstyle{gray kpointconj}=[kpointconj,fill=gray!50!white]
\tikzstyle{gray kpointtrans}=[kpointtrans,fill=gray!50!white]

\tikzstyle{gray dkpoint}=[kpoint,fill=gray!50!white,doubled]
\tikzstyle{gray dkpointdag}=[kpointdag,fill=gray!50!white,doubled]
\tikzstyle{gray dkpointadj}=[kpointadj,fill=gray!50!white,doubled]
\tikzstyle{gray dkpointconj}=[kpointconj,fill=gray!50!white,doubled]
\tikzstyle{gray dkpointtrans}=[kpointtrans,fill=gray!50!white,doubled]

\tikzstyle{white label}=[draw,fill=white,rectangle,inner sep=0.7 mm]
\tikzstyle{gray label}=[draw,fill=gray!50!white,rectangle,inner sep=0.7 mm]
\tikzstyle{black label}=[draw,fill=black,rectangle,inner sep=0.7 mm]

\tikzstyle{dkpoint}=[kpoint,doubled]
\tikzstyle{wide dkpoint}=[wide kpoint,doubled]
\tikzstyle{dkpointdag}=[kpoint adjoint,doubled]
\tikzstyle{wide dkpointdag}=[wide kpointdag,doubled]
\tikzstyle{dkcopoint}=[kpoint adjoint,doubled]
\tikzstyle{dkpointadj}=[kpoint adjoint,doubled]
\tikzstyle{dkpointconj}=[kpoint conjugate,doubled]
\tikzstyle{dkpointtrans}=[kpoint transpose,doubled]

\tikzstyle{kscalar}=[kpoint common, shape=EBox, inner xsep=-1pt, inner ysep=3pt,font=\small]
\tikzstyle{kscalarconj}=[kpoint common, shape=WBox, inner xsep=-1pt, inner ysep=3pt,font=\small]

\tikzstyle{spekpoint}=[kpoint sc,minimum height=5mm,inner sep=3pt]
\tikzstyle{spekcopoint}=[kpoint adjoint sc,minimum height=5mm,inner sep=3pt]

\tikzstyle{dspekpoint}=[spekpoint,doubled]
\tikzstyle{dspekcopoint}=[spekcopoint,doubled]

 \tikzstyle{upground}=[circuit ee IEC,thick,ground,rotate=90,scale=2.5]
 \tikzstyle{downground}=[circuit ee IEC,thick,ground,rotate=-90,scale=2.5]
 \tikzstyle{bigground}=[regular polygon,regular polygon sides=3,draw=gray,scale=0.50,inner sep=-0.5pt,minimum width=10mm,fill=gray]

\tikzstyle{arrs}=[-latex,font=\small,auto]
\tikzstyle{arrow plain}=[arrs]
\tikzstyle{arrow dashed}=[dashed,arrs]
\tikzstyle{arrow bold}=[very thick,arrs]
\tikzstyle{arrow hide}=[draw=white!0,-]
\tikzstyle{arrow reverse}=[latex-]
\tikzstyle{cdnode}=[]

\tikzstyle{discarding}=[fill=white, draw=black, shape=circle, style=upground]
\tikzstyle{smalldiscarding}=[fill=white, draw=black, style=upground, scale=0.5]
\tikzstyle{backdiscard}=[fill=white, draw=black, shape=circle, style=downground, scale=0.5]
\tikzstyle{smallbackdiscard}=[fill=white, draw=black, shape=circle, style=downground, scale=0.5]
\tikzstyle{state}=[fill=white, draw=black, style=triang, tikzit shape=rectangle]
\tikzstyle{kstate}=[fill=white, draw=black, style=kpoint, tikzit shape=rectangle]
\tikzstyle{kstateconj}=[fill=white, draw=black, style=kpoint conjugate, tikzit shape=rectangle]
\tikzstyle{kstateBIG}=[fill=white, draw=black, style=big kpoint, tikzit shape=rectangle]
\tikzstyle{effect}=[fill=white, draw=black, style=triangdag]
\tikzstyle{keffect}=[fill=white, draw=black, style=kpoint adjoint]
\tikzstyle{keffectconj}=[fill=white, draw=black, style=kpoint transpose]
\tikzstyle{morphdag}=[style=mapdag]
\tikzstyle{morph}=[style=hadamard]
\tikzstyle{WIDEmorph}=[style=hadamard, minimum width=14mm]
\tikzstyle{morphtrans}=[style=maptrans]
\tikzstyle{morphconj}=[style=mapconj]
\tikzstyle{CPMmorph}=[style=dmap]
\tikzstyle{CPMmorphconj}=[style=dmapconj]
\tikzstyle{CPMmorphdag}=[style=dmapdag]
\tikzstyle{CPMmorphtrans}=[style=dmaptrans]
\tikzstyle{CPMstate}=[fill=white, draw=black, style=triang, doubled]
\tikzstyle{CPMstateBIG}=[fill=white, draw=black, style={triang_lesssep}, doubled]
\tikzstyle{CPMkstate}=[fill=white, draw=black, style=kpoint, tikzit shape=rectangle, doubled]
\tikzstyle{CPMkstateconj}=[fill=white, draw=black, style=kpoint conjugate, tikzit shape=rectangle, doubled]
\tikzstyle{CPMkstateBIG}=[fill=white, draw=black, style=big kpoint, tikzit shape=rectangle, doubled]
\tikzstyle{CPMkeffect}=[fill=white, draw=black, style=kpoint adjoint, doubled]
\tikzstyle{CPMkeffectconj}=[fill=white, draw=black, style=kpoint transpose, doubled]
\tikzstyle{UHfB}=[fill=white, draw=black, style=triangdag, doubled, inner sep=-2pt]
\tikzstyle{leak}=[style=tinypoint, regular polygon rotate=-90]
\tikzstyle{leakfill}=[style=tinypoint, regular polygon rotate=-90, fill=black]
\tikzstyle{Z}=[style=dot, fill=green]
\tikzstyle{X}=[style=dot, fill=red]
\tikzstyle{black_dot}=[style=dot, fill=black]
\tikzstyle{white_dot}=[style=dot, fill=white]
\tikzstyle{qblack_dot}=[style=ddot, fill=black]
\tikzstyle{qwhite_dot}=[style=ddot, fill=white]
\tikzstyle{whitephase}=[style=wphase dot, fill=white]
\tikzstyle{qredphase}=[style=phase dot, fill=red]
\tikzstyle{qgreenphase}=[style=phase dot, fill=green]
\tikzstyle{had}=[style=hadamard, doubled]
\tikzstyle{box}=[style=hadamard]
\tikzstyle{classhad}=[style=hadamard]
\tikzstyle{antipode}=[style=anti]

\tikzstyle{dottededge}=[-,thick, densely dotted]
\tikzstyle{double edge}=[-, style=doubled, draw=black, tikzit draw={rgb,255: red,234; green,209; blue,255}]
\tikzstyle{new edge style 0}=[<-]
\tikzstyle{new edge style 1}=[-, draw={rgb,255: red,234; green,209; blue,255}, fill={rgb,255: red,234; green,209; blue,255}]
\tikzstyle{new edge style 1b}=[-, draw={rgb,255: red,208; green,218; blue,216}, fill={rgb,255: red,208; green,218; blue,216}]
\tikzstyle{new edge style 2}=[-, draw={rgb,255: red,14; green,188; blue,83}]
\tikzstyle{new edge style 3}=[<-, draw={rgb,255: red,234; green,209; blue,255}]
\tikzstyle{new edge style 4}=[<-, draw={rgb,255: red,0; green,128; blue,128}]
\tikzstyle{new edge style 5}=[-, draw={rgb,255: red,214; green,110; blue,62}]
\tikzstyle{new edge style 6}=[-, draw={rgb,255: red,174; green,20; blue,174}]

\usetikzlibrary{calc, positioning, shapes.geometric}
\usetikzlibrary{
	arrows,
	shapes,
	decorations,
	intersections,
	backgrounds,
	positioning,
	circuits.ee.IEC
	}

\newcommand{\tikzfigscale}[2]{\scalebox{#1}{\input{#2.tikz}}}

\newcommand{\cat}[1]{\mathcal{#1}}

\newcommand{\opcat}[1]{#1^\textrm{op}}

\newcommand{\seq}{\olessthan}

\makeatletter
\newcommand{\xRightarrow}[2][]{\ext@arrow 0359\Rightarrowfill@{#1}{#2}}
\makeatother

\usepackage{mathtools}
\usepackage{quiver}

\pgfsetlayers{background,strings,edgelayer,nodelayer,foreground,main}

\title{Monoidal su-categories}

\author{
Matt Wilson\\
\small Universit\'e Paris-Saclay, CNRS, ENS Paris-Saclay, Inria, CentraleSup\'elec,\\
\small Laboratoire M\'ethodes Formelles, France
\and
Giulio Chiribella\\
\small HKU-Oxford Joint Laboratory for Quantum Information and Computation
}
\date{}

\newtheorem{theorem}{Theorem}[section]

\theoremstyle{definition}
\newtheorem{definition}[theorem]{Definition}
\newtheorem{example}[theorem]{Example}
\theoremstyle{remark}

\begin{document}

\maketitle

\begin{abstract}
We introduce monoidal su-categories, an abstract categorical notion of single-input higher-order process over a monoidal category. The definition separates a base category $\mathcal{C}$ of lower-order processes from a monoidal category $\mathcal{V}$ of holes or supermaps and axiomatizes the compatibility needed for partial application to bipartite processes. For a fixed monoidal base $\mathcal{C}$, monoidal su-categories, monoidal su-functors, and monoidal su-natural transformations form a $2$-category $\mathbf{MonSuCat}_{\mathcal{C}}$. We then show that the category $\mathbf{Optic}[\mathcal{C}]$ of coend optics is $2$-initial in this $2$-category, giving a universal-property characterisation of coend optics as the minimal monoidal theory of single-hole contexts.
\end{abstract}


\section{Introduction}
Higher-order processes arise naturally across classical and quantum theoretical computer science. The framework of higher-order quantum operations provides a common language for quantum aspects of the agent-environment interaction in terms of quantum networks, quantum causal correlations, quantum processes with memory, and quantum games \cite{chiribella_supermaps,Chiribella2009TheoreticalNetworks,Bisio_2019,oreshkov,Pollock2015Non-MarkovianCharacterisation,Gutoski_2007}. In applied category theory, closely related hole constructions \cite{Riley2018CategoriesOptics,PickeringGW17,Clarke2020ProfunctorUpdate,Clarke2021ALenses,Lopez-Gonzalez2019TowardsLenses} support bidirectional transformations \cite{PickeringGW17,Foster2007CombinatorsProblem,Abou-Saleh2018IntroductionTransformations}, along with compositional approaches to classical game theory, learning, and cybernetics \cite{Ghani2016CompositionalTheory,Hedges2017CoherenceGames,FongST19,CapucciGHR22}. 

Many concrete constructions of holes are already available, including coend optics \cite{Riley2018CategoriesOptics,hefford_coend}, combs \cite{Chiribella2009TheoreticalNetworks,hefford_coend}, higher-order causal processes \cite{Kissinger2019AStructure,SimmonsKissinger2022,simmons_completelogic,jencova2024structurehigherorderquantum,Hoffreumon_2026}, and strong-natural transformations \cite{hefford_supermaps,wilson2026supermapsgeneralisedtheories,wilson_locality}. A unifying feature among these constructions is local application, more commonly referred to in the quantum-information literature as completeness. At the level of states, local application appears as complete positivity: an operation must remain valid when tensored with the identity on an arbitrary ancilla \cite{Nielsen2010QuantumInformation}. At higher order, the same principle underlies quantum superchannels \cite{chiribella_supermaps,Chiribella2009TheoreticalNetworks}, where a legitimate transformation must be applicable to part of any multipartite process, as in the following intuitive picture
\[   \tikzfigscale{1}{mon_su_7b}. \]
Despite its salience, this completeness principle for higher-order transformations does not yet have a clear isolated categorical formulation.

To move towards a stable categorical algebra of holes, we isolate and formulate completeness, that is, local application, in categorical terms.
To this end, we introduce monoidal su-categories (where ``su'' refers to supermaps) as an abstract categorical model of single-input higher-order processes on a monoidal category. Our focus is on the categorical algebra needed to talk about local application, coherence, and comparison between different hole constructions.
\begin{theorem}
For every fixed monoidal category $\mathcal{C}$, monoidal su-categories over $\mathcal{C}$ form a $2$-category $\mathbf{MonSuCat}_{\mathcal{C}}$.
\end{theorem}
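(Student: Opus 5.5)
The plan is to exhibit $\mathbf{MonSuCat}_{\mathcal{C}}$ as a sub-$2$-category of a known strict $2$-category. The natural ambient choice is the $2$-category of monoidal categories $\mathcal{V}$ equipped with an action-like structure on $\mathcal{C}$. In concrete terms:
\begin{itemize}
\item the $0$-cells are pairs consisting of a monoidal category $\mathcal{V}$ of holes together with the application data relating $\mathcal{V}$ to $\mathcal{C}$;
\item the $1$-cells are monoidal functors $F:\mathcal{V}\to\mathcal{V}'$ commuting with this data up to coherent comparison $2$-cells;
\item the $2$-cells are monoidal natural transformations compatible with the comparison cells.
\end{itemize}
The remaining work is to check three things: that identities and composites of monoidal su-functors are again monoidal su-functors, that vertical and horizontal composites of monoidal su-natural transformations remain su-natural, and that the $2$-category axioms are inherited from $\mathbf{Cat}$ (or $\mathbf{MonCat}$).

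\textbf{Step 1: identities and composition of $1$-cells.} The identity su-functor on a monoidal su-category is the identity functor on $\mathcal{V}$, with identity comparison maps; every axiom then holds trivially. For a composite $G\circ F$, I will take as underlying functor the composite of the underlying monoidal functors, with monoidal structure maps $G(\phi^F)\circ\phi^G$ as usual. The comparison cells witnessing compatibility with partial application (the local application to a bipartite process in $\mathcal{C}\otimes$ an ancilla) are defined by pasting the comparison cell of $G$ with $G$ whiskered by the comparison cell of $F$. I will then verify each coherence axiom of a monoidal su-functor for this pasting. Each such axiom should follow by stacking the corresponding diagram for $F$ under the image by $G$ of the diagram for $G$, using naturality of $G$'s comparison to commute the two layers. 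Associativity and unitality of $1$-cell composition then hold strictly, because they hold for functors and the pasted comparison cells compose associatively.

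\textbf{Step 2: $2$-cells.} Vertical composition is componentwise composition of the underlying natural transformations. Su-naturality, meaning the compatibility square with the comparison cells, is preserved by gluing the two squares. Horizontal composition $\beta * \alpha$ is defined as $\beta_{F'} \circ G(\alpha)$. Its monoidality and su-naturality follow from naturality of $\beta$ with respect to the comparison data together with functoriality of $G$ applied to the square for $\alpha$. The interchange law and the identity $2$-cells are inherited from $\mathbf{Cat}$, since all structure is given by equations on underlying components.

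\textbf{Main obstacle.} I expect the hardest part to be showing that the pasted comparison cell for $G\circ F$ satisfies the axiom expressing compatibility of local application with the tensor of $\mathcal{V}$ and with the tensor on the ancilla wire in $\mathcal{C}$. This axiom mixes the monoidal structure maps of the functors with the application data, so the pasting involves several naturality squares at once. I would first try a string-diagrammatic or pasting-diagram argument, decomposing the required equation into the known axiom for $F$, its image under $G$, the axiom for $G$ instantiated at $F$-images, and naturality of $\phi^G$. If the definitions in the paper are phrased so that the comparison cells are identities (strict su-functors), this step collapses to a routine check. Everything else is bookkeeping.
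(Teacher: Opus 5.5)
Your proposal is correct and takes essentially the same route as the paper. Both arguments treat $\mathbf{MonCat}$ as the ambient $2$-category and define $\mu^{\mathcal{G}\mathcal{F}} := \mathcal{G}(\mu^{\mathcal{F}})\circ\mu^{\mathcal{G}}$. Both check the unit and tensor su-axioms for composites by pasting $\mathcal{G}$'s axiom, $\mathcal{G}$ applied to $\mathcal{F}$'s axiom, and naturality of $\phi^{\mathcal{G}}$, and both check that the su-triangles survive identities, vertical composition and whiskering.

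One small correction: $\mathcal{G}$'s axiom is used at the hole objects $[c,c']$ of $\mathcal{V}'$, not at $\mathcal{F}$-images. It is naturality of $\phi^{\mathcal{G}}$ along $\mu^{\mathcal{F}}_{c,c'}\boxtimes\mu^{\mathcal{F}}_{d,d'}$ that transports the composite onto $\mathcal{F}[c,c']\boxtimes\mathcal{F}[d,d']$.
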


We also give a direct graphical interpretation of the axioms and use it to establish a structural theorem on categories of holes. More precisely, we prove the following.
\begin{theorem}
The monoidal su-category of coend optics $\mathbf{Optic}[\mathcal{C}]$ over a symmetric monoidal category $\mathcal{C}$ is a $2$-initial object in $\mathbf{MonSuCat}_{\mathcal{C}}$.
\end{theorem}

This work sits between two existing lines of approach. On one side, a number of higher-order constructions land in variants of $*$-autonomous and BV-like settings \cite{Kissinger2019AStructure,simmons_completelogic,SimmonsKissinger2022,hefford_supermaps,hefford2025bvcategoryspacetimeinterventions,wilson2026supermapsgeneralisedtheories,jencova2024structurehigherorderquantum,Hoffreumon_2026,Bisio_2019,COCKETT1997133,blute_BV, hirata_et_al:LIPIcs.LICS.2026.57}, where multi-input holes and local application can be studied simultaneously. On the other side, recent work attempts to isolate the \textit{essential} structure of such constructions for them to be interpreted as returning legitimate theories of higher-order processes, using enrichment \cite{wilson2023mathematical} together with polycategorical and Frobenius-like cotensors \cite{wilson2022freepolycategoriesunitarysupermaps,wilson2026higherordercircuits}, though at present this essential structure is treated only in the strict setting. Monoidal su-categories isolate the single-hole local-application fragment common to these settings. This further restriction allows us to tractably relax strictness, introducing coherences and defining functors and natural transformations for comparing and characterising theories of holes, a crucial step towards a full categorical algebra for holes which incorporates multiple inputs.



\section{Monoidal categories}
We recall briefly the usual abstract algebraic model for processes with both sequential and parallel composition; standard references are \cite{maclane1998categories,joyal_street1,coecke_kissinger_2017}. Throughout the paper, $\mathcal{C}$ denotes a base category of lower-order processes and $\mathcal{V}$ denotes a category of higher-order maps. We reserve boldface for named constructions such as $\mathbf{Optic}[\mathcal{C}]$, $\mathbf{Comb}[\mathcal{C}]$, $\mathbf{Slot}[\mathcal{C}]$, and $\mathbf{MonSuCat}_{\mathcal{C}}$. 

To specify a category $\mathcal{C}$ one gives a collection of objects $a,b,\dots$ together with a collection of morphisms $f:a \rightarrow b$ for each pair of objects $a,b$. The set of morphisms of type $a \rightarrow b$ is typically denoted $\mathcal{C}(a,b)$, and for each $a,b,c$ there is a sequential composition rule $\circ_{abc}: \mathcal{C}(a,b) \times \mathcal{C}(b,c) \rightarrow \mathcal{C}(a,c)$ which is associative in the sense that $(h \circ g) \circ f = h \circ (g \circ f)$. Note that subscripts on $\circ_{abc}$ are dropped whenever the context is clear. Sequential composition is also unital, meaning that there exists an identity morphism $1_a: a \rightarrow a$ for each object $a$ satisfying $1_b \circ f = f = f \circ 1_a$.

A strict monoidal category is a category equipped with a parallel composition on objects which lifts to morphisms. More precisely, each pair of objects $a,b$ defines a new object $a \otimes b$, and each pair of morphisms $f : a \rightarrow a'$ and $g : b \rightarrow b'$ defines a new morphism $f \otimes g : a \otimes b \rightarrow a' \otimes b'$. The tensor is associative so that $(a \otimes b) \otimes c = a \otimes (b \otimes c)$ and $(f \otimes g) \otimes h = f \otimes (g \otimes h)$, and it has a unit object $I$ such that $I \otimes a = a = a \otimes I$. Finally, sequential and monoidal composition interact through the interchange law
\[
(f \otimes g) \circ (f' \otimes g') = (f \circ f') \otimes (g \circ g').
\]

Most naturally occurring monoidal categories are not strict. In general monoidal categories associativity is relaxed up to a natural isomorphism $\alpha_{abc}: (a \otimes b) \otimes c \cong a \otimes (b \otimes c)$, and unitality is relaxed to the existence of natural isomorphisms $\lambda_a: I \otimes a \cong a$ and $\rho_a: a \otimes I \cong a$. These natural isomorphisms are required to satisfy coherence conditions referred to as the triangle and pentagon laws.

Monoidal categories find application in quantum information because they provide the abstract algebra behind circuit-based models of quantum computation and quantum processes \cite{coecke_kissinger_2017,Heunen2013IntroductionMechanics}. Circuit diagrams are sound and complete for monoidal categories \cite{joyal_street1,Selinger2009ACategories}, meaning that any equality derivable from purely topological moves on a circuit diagram is true by the defining equations of a monoidal category and vice versa. Circuit diagrams of this kind in category theory are typically referred to as string diagrams \cite{joyal_street1,Selinger2009ACategories}.

In string-diagrammatic notation, objects are drawn as wires and morphisms as boxes between those wires: \[ \tikzfigscale{1}{mon_cat_1}. \] Sequential and monoidal composition are denoted respectively by \[ \tikzfigscale{1}{mon_cat_2} \quad \text{and} \quad \tikzfigscale{1}{mon_cat_3}. \] Finally, the unit object $I$ is left unlabelled, being absorbed graphically as empty space.

\section{Monoidal su-categories}
We now introduce the single-hole structure studied in the rest of the paper. Intuitively, $\mathcal{C}$ contains ordinary processes, $\mathcal{V}$ contains higher-order maps between holes, the object $[a,a'] \in \mathcal{V}$ is the abstract hole with input $a$ and output $a'$, the isomorphism $\theta$ identifies ordinary processes with states of holes, and the structural map $\mu^{\boxtimes}$ expresses the possibility of applying a higher-order map locally to one tensor factor of a bipartite process.

\begin{definition}[Monoidal su-category]
A monoidal su-category consists of:
\begin{itemize}
\item A pair of monoidal categories $(\mathcal{V}, \mathcal{C})$
\item A functor $[-,=]: \mathcal{C}^{op} \times \mathcal{C} \rightarrow \mathcal{V}$
\item A natural isomorphism $\theta:\mathcal{C}(c,d) \cong \mathcal{V}(I_{\mathcal{V}}, [c,d])$ 
\item A natural isomorphism $\mu^{\boxtimes}_{aa'bb'}: [a \otimes b, a' \otimes b'] \rightarrow [a,a'] \boxtimes [b,b']$
\end{itemize}
such that $\theta(1_{I_{\mathcal{C}}}):I_{\mathcal{V}} \rightarrow [I_{\mathcal{C}},I_{\mathcal{C}}]$ is an isomorphism and the following diagrams commute:
\begin{itemize}
\item splitting property:  \[\begin{tikzcd}
		{I_{\mathcal{V}}} && {I_{\mathcal{V}} \boxtimes I_{\mathcal{V}}} \\
		{[a \otimes b, a \otimes b]} && {[a,a]\boxtimes[b,b]}
		\arrow["\lambda^{-1}_{I_{\mathcal{V}}}", from=1-1, to=1-3]
		\arrow["{\theta(1_a) \boxtimes \theta(1_b)}", from=1-3, to=2-3]
		\arrow["{\theta(1_{a \otimes b})}"', from=1-1, to=2-1]
		\arrow["{\mu^{\boxtimes}_{(a,b), (a,b)}}"', from=2-1, to=2-3]
\end{tikzcd}\]
\item mutual coherence between lower and higher associators \[
\begin{adjustbox}{max width=\textwidth}
\begin{tikzcd}
	& {[c \otimes(d \otimes e), (c' \otimes d') \otimes e']} \\
	{[(c \otimes d) \otimes e, (c' \otimes d') \otimes e']} && {[c \otimes(d \otimes e), c' \otimes (d' \otimes e')]} \\
	{[c \otimes d,c' \otimes d'] \boxtimes [ e,e']} && {[c,c'] \boxtimes [d \otimes e,d' \otimes e']} \\
	{([c,c'] \boxtimes [d,d']) \boxtimes [e,e']} && {[c,c'] \boxtimes ([d,d'] \boxtimes [e,e'])}
	\arrow["{[\alpha_{cde}, 1]}"', from=1-2, to=2-1]
	\arrow["{\mu_{(c \otimes d)( c' \otimes d') ee'}}"', from=2-1, to=3-1]
	\arrow["{\mu_{cc'dd'} \boxtimes 1}"', from=3-1, to=4-1]
	\arrow["{[1, \alpha_{c'd'e'}]}"', from=1-2, to=2-3]
	\arrow["{\mu_{cc'  (d\otimes e) (d' \otimes e')}}"', from=2-3, to=3-3]
	\arrow["{1 \boxtimes \mu_{dd'ee'}}"', from=3-3, to=4-3]
	\arrow["{\alpha_{[c,c'][d,d'][e,e']}}"', from=4-1, to=4-3]
\end{tikzcd}
\end{adjustbox}
\]
\item mutual coherence between lower and higher unitors: \[\begin{tikzcd}
		{[I_{\mathcal{C}} \otimes c, I_{\mathcal{C}} \otimes d]} && {[c,I_{\mathcal{C}} \otimes d]} \\
		{[I_{\mathcal{C}}, I_{\mathcal{C}}] \boxtimes [c,d]} && {[c,d]} \\
		& {I_{\mathcal{V}} \boxtimes [c,d]}
		\arrow["{[\lambda_c,1_{I_{\mathcal{C}} \otimes d}]}"', from=1-3, to=1-1]
		\arrow["{[1_c,\lambda_d]}", from=1-3, to=2-3]
		\arrow["{\mu^{\boxtimes}_{(I_{\mathcal{C}},c),(I_{\mathcal{C}},d)}}"', from=1-1, to=2-1]
		\arrow["{\theta(1_{I_{\mathcal{C}}})^{-1} \boxtimes 1}"', from=2-1, to=3-2]
		\arrow["{\lambda_{[c,d]}}"', from=3-2, to=2-3]
\end{tikzcd}\]
\end{itemize}
\end{definition}


Let us now unpack and directly interpret each feature of the definition of monoidal-su categories. 
In line with the enrichment-based approach of \cite{wilson2023mathematical}, which models multi-hole structure, a monoidal su-category separates the lower-order process theory $\mathcal{C}$ from the category $\mathcal{V}$ of holes or supermaps. The functor $[-,=]$ sends each pair of objects $a,a'$ in $\mathcal{C}$ to the hole object $[a,a']$ in $\mathcal{V}$:
\[   \tikzfigscale{1}{hole_pic_0} \quad \sim \quad  \ \tikzfigscale{1}{hole_pic_1}.  \]

Morphisms of type $S: [a,a'] \rightarrow [b,b']$ then represent maps between holes, or supermaps, and through the isomorphism $\theta$ every state of type $I_{\mathcal{V}} \rightarrow [a,a']$ corresponds to an ordinary morphism $a \rightarrow a'$ in $\mathcal{C}$:
\[   \tikzfigscale{1}{mon_su_1} \quad \sim \quad  \ \tikzfigscale{1}{mon_su_2}.\quad \quad \quad \quad  \tikzfigscale{1}{mon_su_3} \quad \sim \quad  \ \tikzfigscale{1}{mon_su_4} \quad \sim \quad  \ \tikzfigscale{1}{mon_su_5}.  \]
Note that composition in $\mathcal{V}$ represents nesting of supermaps between holes \[     \tikzfigscale{1}{mon_su_nest_0} \quad \sim \quad  \ \tikzfigscale{1}{mon_su_nest_1}.  \]
The isomorphism $\mu^{\boxtimes}$ is the key feature for modelling completeness: it means that states of type $I_{\mathcal{V}} \rightarrow [a,a'] \boxtimes [b,b']$ precisely correspond to bipartite morphisms in $\mathcal{C}$, that is, to morphisms of type $a \otimes b \rightarrow a' \otimes b'$. This gives categorical semantics to partial application on arbitrary bipartite morphisms:
\[   \tikzfigscale{1}{mon_su_6} \quad \sim \quad  \ \tikzfigscale{1}{mon_su_7}. \]

The remaining conditions in the definition of a monoidal su-category are coherence conditions, so they do not admit much direct diagrammatic interpretation beyond identifying different ways of reading empty space and formal bracketing.
\begin{example}[Optics]\label{ex:optics}
Following \cite{Riley2018CategoriesOptics,hefford_coend}, the symmetric monoidal category $\mathbf{Optic}[\mathcal{C}]$ has object pairs $(a,a')$ of objects of $\mathcal{C}$ and hom-sets \[\mathbf{Optic}[\mathcal{C}]((a,a'),(b,b')):=\int^{m \in \mathcal{C}}\mathcal{C}(b,m \otimes a)\times \mathcal{C}(m \otimes a',b').\]
Concretely, a morphism is represented by an auxiliary object $m$ together with a pair of maps
\[( f:b \longrightarrow m \otimes a, g:m \otimes a' \longrightarrow b' ), \]
quotiented by equivalence under sliding, that is, equivalence under $( (1 \otimes u) f, g) \cong ( f, g(1 \otimes u)) $. Graphically, coend optics can be denoted by
\[\tikzfigscale{1}{intuit_optic_1} \quad =  \quad \tikzfigscale{1}{intuit_optic_2}\]
With parallel composition on objects defined by $(c,c') \otimes (d,d') := (c \otimes d, c' \otimes d')$ and with sequential and parallel composition of morphisms defined by \[\tikzfigscale{1}{optic_comp_2} \quad \circ \quad  \tikzfigscale{1}{optic_comp_1} \quad = \quad \tikzfigscale{1}{optic_comp_3}, \quad \quad \tikzfigscale{1}{optic_comp_4} \quad \otimes \quad  \tikzfigscale{1}{optic_comp_1}  \quad = \quad  \tikzfigscale{1}{optic_comp_5}   \]
The symmetric monoidal category $\mathbf{Optic}[\mathcal{C}]$ can be equipped with the structure of a symmetric monoidal su-category as follows.
\begin{itemize}
    \item The functor $[-,=]_{\cong}:\mathcal{C}^{op} \times \mathcal{C} \rightarrow \mathbf{Optic}[\mathcal{C}]$ is the identity on objects and sends a pair of morphisms $(f:b \rightarrow a,g:a' \rightarrow b')$ to the corresponding memoryless optic, with auxiliary object $I_{\mathcal{C}}$, induced by $f$ and $g$ through the unitors of $\mathcal{C}$.
    \item The natural isomorphism $\mathbf{Optic}[\mathcal{C}]((I_{\mathcal{C}},I_{\mathcal{C}}),(c,c')) \cong \mathcal{C}(c,c')$, where a state is represented by an auxiliary-free optic and the unitors identify that representative with an ordinary morphism $c \rightarrow c'$.
    \item Noting that $[a,a'] \boxtimes [b,b'] = (a,a') \boxtimes (b,b') = (a \otimes b , a' \otimes b') = [a \otimes b , a' \otimes b']$, we take $\mu_{aa'bb'}$ to be the identity morphism.
\end{itemize}
Graphically, these assignments are represented by \[ [f,g] \quad \mapsto \quad (f,g) := \tikzfigscale{1}{optic_assign_1}, \qquad f \quad \mapsto \quad \tikzfigscale{1}{optic_assign_2} = \tikzfigscale{1}{optic_assign_3}. \]
The required coherences are inherited from the coherence isomorphisms in $\mathcal{C}$, and the splitting law reduces to \[ \tikzfigscale{1}{optic_assign_4} \boxtimes \tikzfigscale{1}{optic_assign_5} = \tikzfigscale{1}{optic_assign_6}. \]
\end{example}

\begin{example}[Quantum superchannels]
Traditionally, the definition of quantum superchannels is given directly in terms of the Choi--Jamio{\l}kowski isomorphism \cite{chiribella_supermaps}. To keep the presentation self-contained, we instead follow \cite{wilson2023compositional} and use simple isomorphisms available in finite-dimensional Hilbert spaces, which are in general used to construct the Choi--Jamio{\l}kowski isomorphism. We note that the set of quantum channels $\mathbf{QC}(A,A')$ from Hilbert space $A$ to Hilbert space $A'$ is a subset $\mathbf{QC}(A,A') \subseteq \mathbf{L}(\mathbf{L}(A),\mathbf{L}(A'))$ of the set of linear maps from $\mathbf{L}(A)$ to $\mathbf{L}(A')$, where for any $X$, $\mathbf{L}(X) := \mathbf{L}(X,X)$. The local application $\bigl(S \otimes_u i\bigr)(\mathcal{E})$ of any linear map $S \colon \mathbf{L}(\mathbf{L}(A),\mathbf{L}(A')) \to \mathbf{L}(\mathbf{L}(B),\mathbf{L}(B'))$ to an element
\[
\mathcal{E} \in \mathbf{QC}(A \otimes X, A' \otimes X')
\subseteq \mathbf{L}(\mathbf{L}(A \otimes X), \mathbf{L}(A' \otimes X'))
\]
can be defined using the following isomorphisms (which are inherited from the compact closure of the category $\mathbf{L}$ of finite-dimensional Hilbert spaces, giving for each $A,B$ a natural isomorphism $u_{AB} : \mathbf{L}(A \otimes B , A \otimes B) \cong \mathbf{L}(A  , A ) \otimes_{\mathbf{L}} \mathbf{L}(B ,  B)$):
\begin{equation}
\begin{tikzcd}[column sep=large, row sep=large]
\mathbf{L}(\mathbf{L}(A),\mathbf{L}(A')) \otimes \mathbf{L}(\mathbf{L}(X),\mathbf{L}(X'))
  \arrow[r, "S \otimes i"]
&
\mathbf{L}(\mathbf{L}(B),\mathbf{L}(B')) \otimes \mathbf{L}(\mathbf{L}(X),\mathbf{L}(X'))
\\
\mathbf{L}(\mathbf{L}(A)\otimes \mathbf{L}(X), \mathbf{L}(A')\otimes \mathbf{L}(X'))
  \arrow[u, "u^{-1}_{LALA'LXLX'}"']
&
\mathbf{L}(\mathbf{L}(B)\otimes \mathbf{L}(X), \mathbf{L}(B')\otimes \mathbf{L}(X'))
  \arrow[u, "u_{LBLB'LXLX'}"]
\\
\mathbf{L}(\mathbf{L}(A\otimes X), \mathbf{L}(A' \otimes X'))
  \arrow[u, "{\mathbf{L}(u_{AX},u^{-1}_{A'X'})}"']
  \arrow[r, "(S \otimes_u i)"']
&
\mathbf{L}(\mathbf{L}(B\otimes X), \mathbf{L}(B' \otimes X'))
  \arrow[u, "{\mathbf{L}(u^{-1}_{BX},u_{B'X'})}"]
\end{tikzcd},
\tag{2.8}
\end{equation}
A linear map $S$ is a quantum superchannel if, for every $\mathcal{E} \in \mathbf{QC}(A \otimes X, A' \otimes X')$, one has
\[
\bigl(S \otimes_u i\bigr)(\mathcal{E}) \in \mathbf{QC}(B \otimes X, B' \otimes X').
\]
In other words, a quantum superchannel is a linear map that \emph{completely} preserves quantum channels: it can be \emph{locally applied} to a channel and is guaranteed to return a new quantum channel.
\end{example}
In the appendix we give several further constructions of theories of higher-order maps that form monoidal su-categories.




\subsection{The 2-category of monoidal su-categories}
We now define the morphisms and $2$-morphisms that compare monoidal su-categories over a fixed lower-order theory $\mathcal{C}$ while allowing the higher-order category $\mathcal{V}$ to vary.

\begin{theorem}[The $2$-category $\mathbf{MonSuCat}_{\mathcal{C}}$]\label{thm:monsucat-2cat}
For every monoidal category $\mathcal{C}$, the following data define a $2$-category $\mathbf{MonSuCat}_{\mathcal{C}}$.
\begin{itemize}
\item $0$-cells are monoidal su-categories $(\mathcal{V},\mathcal{C})$.
\item $1$-cells are \emph{monoidal su-functors} $(\mathcal{F},\mu^{\mathcal{F}}):(\mathcal{V},\mathcal{C}) \rightarrow (\mathcal{V}',\mathcal{C})$, where $\mathcal{F}:\mathcal{V}\rightarrow\mathcal{V}'$ is a monoidal functor and $\mu^{\mathcal{F}}_{a,a'}:[a,a'] \rightarrow \mathcal{F}[a,a']$ is a natural isomorphism such that, writing $\zeta_{cc'dd'} := (\mu^{\boxtimes}_{cc'dd'})^{-1}$ and $(u^{\mathcal{F}},\phi^{\mathcal{F}})$ for the monoidal structure of $\mathcal{F}$, the diagrams
\[
\begin{tikzcd}
I_{\mathcal{V}'} && {[I_{\mathcal{C}},I_{\mathcal{C}}]} \\
\mathcal{F}(I_{\mathcal{V}}) && \mathcal{F}[I_{\mathcal{C}},I_{\mathcal{C}}]
\arrow["{\theta'(1_{I_{\mathcal{C}}})}", from=1-1, to=1-3]
\arrow["{u^{\mathcal{F}}}"', from=1-1, to=2-1]
\arrow["{\mu^{\mathcal{F}}_{I_{\mathcal{C}},I_{\mathcal{C}}}}", from=1-3, to=2-3]
\arrow["{\mathcal{F}(\theta(1_{I_{\mathcal{C}}}))}"', from=2-1, to=2-3]
\end{tikzcd}
\]
and
\[
\begin{tikzcd}[column sep=large]
{[c \otimes d,c' \otimes d']} &&& {[c,c'] \boxtimes [d,d']} \\
{\mathcal{F}[c \otimes d,c' \otimes d']} &&& {\mathcal{F}[c,c'] \boxtimes \mathcal{F}[d,d']}
\arrow["{\mu^{\mathcal{F}}_{c \otimes d,c' \otimes d'}}"', from=1-1, to=2-1]
\arrow["{\zeta_{cc'dd'}}", from=1-4, to=1-1]
\arrow["{\mu^{\mathcal{F}}_{c,c'} \boxtimes \mu^{\mathcal{F}}_{d,d'}}", from=1-4, to=2-4]
\arrow["{\mathcal{F}(\zeta_{cc'dd'}) \circ \phi^{\mathcal{F}}_{[c,c'],[d,d']}}", from=2-4, to=2-1]
\end{tikzcd}
\]
commute. Composition is given by
\[\mu^{\mathcal{G}\mathcal{F}} := \mathcal{G}(\mu^{\mathcal{F}})\circ \mu^{\mathcal{G}}.\]
and 
\[u^{\mathcal{G}\mathcal{F}} := \mathcal{G}(u^{\mathcal{F}})\circ u^{\mathcal{G}}.\]
\item $2$-cells are \emph{monoidal su-natural transformations} $\eta:(\mathcal{F},\mu^{\mathcal{F}})\Rightarrow(\mathcal{G},\mu^{\mathcal{G}})$, that is, monoidal natural transformations $\eta:\mathcal{F}\Rightarrow\mathcal{G}$ satisfying
\[
\begin{tikzcd}
& {[c,c']} & \\
{\mathcal{F}[c,c']} && {\mathcal{G}[c,c']}
\arrow["{\mu^{\mathcal{F}}_{c,c'}}"', from=1-2, to=2-1]
\arrow["{\mu^{\mathcal{G}}_{c,c'}}", from=1-2, to=2-3]
\arrow["{\eta_{[c,c']}}"', from=2-1, to=2-3]
\end{tikzcd}
\]
for every $c,c' \in \mathcal{C}$.
\end{itemize}
\end{theorem}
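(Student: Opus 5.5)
The plan is to realise $\mathbf{MonSuCat}_{\mathcal{C}}$ as a locally full sub-2-category of a 2-category of \emph{pairs}: monoidal categories $\mathcal{V}$ equipped with extra structure $([-,=],\theta,\mu^{\boxtimes})$. The ambient 2-category $\mathbf{MonCat}$ of monoidal categories, (strong) monoidal functors and monoidal natural transformations is standard. Most of the 2-categorical laws can therefore be inherited, and what remains is closure.

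Concretely, we must show three things:
\begin{itemize}
\item identities and composites of monoidal su-functors are again monoidal su-functors;
\item vertical and horizontal composites of monoidal su-natural transformations satisfy the triangle condition;
\item identity 2-cells satisfy the triangle condition.
\end{itemize}
Associativity and unitality of composition, and the interchange law, then hold automatically. They hold on the underlying monoidal functors and transformations, and on the $\mu$-components the composite $\mathcal{H}(\mathcal{G}(\mu^{\mathcal{F}}))\circ\mathcal{H}(\mu^{\mathcal{G}})\circ\mu^{\mathcal{H}}$ is the same under either bracketing by functoriality of $\mathcal{H}$. Equality of 1-cells means equality of the pair $(\mathcal{F},\mu^{\mathcal{F}})$, so this suffices.

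For 1-cells I would take the identity to be $(1_{\mathcal{V}},1)$; both defining squares then trivially commute. For the composite $\mathcal{G}\mathcal{F}$ I would use the standard composite monoidal structure, with $u^{\mathcal{G}\mathcal{F}}$ as stated and $\phi^{\mathcal{G}\mathcal{F}}=\mathcal{G}(\phi^{\mathcal{F}})\circ\phi^{\mathcal{G}}$. Naturality and invertibility of $\mu^{\mathcal{G}\mathcal{F}}$ are immediate.

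\emph{Unit square.} Paste the unit square for $\mathcal{G}$ with $\mathcal{G}$ applied to the unit square for $\mathcal{F}$. Here I must be careful that the top edge of $\mathcal{F}$'s square involves $\theta'$ of the intermediate su-category. Before composing, I will read the square as relating $\theta$ of the source and target, and type-check it precisely.

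\emph{Tensor square.} Chase
\[\mu^{\mathcal{G}\mathcal{F}}_{c\otimes d,c'\otimes d'}\circ\zeta=\mathcal{G}(\mu^{\mathcal{F}})\circ\mu^{\mathcal{G}}\circ\zeta.\]
Apply $\mathcal{G}$'s square, in the intermediate category, to rewrite $\mu^{\mathcal{G}}\circ\zeta$ as $\mathcal{G}(\zeta)\circ\phi^{\mathcal{G}}\circ(\mu^{\mathcal{G}}\boxtimes\mu^{\mathcal{G}})$. Next, use functoriality of $\mathcal{G}$ together with $\mathcal{F}$'s square under $\mathcal{G}$. Finally, use naturality of $\phi^{\mathcal{G}}$ to move $\mathcal{G}(\mu^{\mathcal{F}})\boxtimes\mathcal{G}(\mu^{\mathcal{F}})$ past it. The result is $\mathcal{G}\mathcal{F}(\zeta)\circ\phi^{\mathcal{G}\mathcal{F}}\circ(\mu^{\mathcal{G}\mathcal{F}}\boxtimes\mu^{\mathcal{G}\mathcal{F}})$.

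I expect this diagram chase to be the main obstacle. The difficulty is not depth but bookkeeping. The three su-categories each carry their own $\zeta$, and the order-of-composition convention in $\mu^{\mathcal{G}\mathcal{F}}:=\mathcal{G}(\mu^{\mathcal{F}})\circ\mu^{\mathcal{G}}$ must be read so that the types match. I would fix the reading in which $\mu^{\mathcal{F}}$ relates the hole objects of the source and target su-categories. I would then verify each square as a pasting of naturality squares.

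For 2-cells, the identity transformation satisfies the triangle condition trivially. For vertical composites,
\[(\eta'\eta)_{[c,c']}\circ\mu^{\mathcal{F}}=\eta'\circ\eta\circ\mu^{\mathcal{F}}=\eta'\circ\mu^{\mathcal{G}}=\mu^{\mathcal{H}}.\]
Horizontal composites are the whiskerings $\mathcal{K}(\eta)\circ\epsilon_{\mathcal{F}}$ for $\epsilon:\mathcal{J}\Rightarrow\mathcal{K}$. The triangle condition follows by applying $\mathcal{K}$ to the triangle for $\eta$ and using naturality of $\epsilon$ on the components $\mu^{\mathcal{F}}$ and $\mu^{\mathcal{G}}$ together with the triangle for $\epsilon$. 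Monoidality of all these composites is the standard fact from $\mathbf{MonCat}$. This completes the verification that $\mathbf{MonSuCat}_{\mathcal{C}}$ is a 2-category.
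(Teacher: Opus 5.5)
Your proposal is correct and follows essentially the same route as the paper's appendix proof. Both inherit the $2$-categorical laws from $\mathbf{MonCat}$, check associativity on the $\mu$-components via functoriality of $\mathcal{H}$, and verify closure of the su-coherences by pasting $\mathcal{G}$'s squares with $\mathcal{G}$ applied to $\mathcal{F}$'s squares, using naturality of $\phi^{\mathcal{G}}$, together with the triangle checks for identity, vertical and horizontal/whiskered $2$-cells. One small terminological slip: because the su-triangle condition restricts the $2$-cells, the result is not a \emph{locally full} sub-$2$-category, though your explicit $2$-cell checks already account for this.
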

\begin{proof}
Given in Appendix~\ref{app:monsucat-2cat}.
\end{proof}

Morphisms in the fixed-base $2$-category $\mathbf{MonSuCat}_{\mathcal{C}}$ give a direct way to compare algebraic theories of holes over the same lower-order process theory $\mathcal{C}$. As we will see, they give a way to frame the idea that coend-optics can be embedded inside \textit{any} theory of concurrent holes. 
\section{Graphical notation for monoidal su-categories}
In this section we introduce a basic string-diagrammatic notation for monoidal su-categories and monoidal su-functors. The notation is intentionally conservative: a supermap of type $[a,a'] \rightarrow [b,b']$ is drawn as an ordinary process in $\mathcal{V}$ rather than as a literal box with a hole. 
A sound and complete hole-based graphical calculus is left for future work.

Recall that a monoidal su-category consists of a pair of monoidal categories $(\mathcal{V}, \mathcal{C})$. We draw string diagrams in $\mathcal{V}$, but use the functor $[-,=]: \mathcal{C}^{op} \times \mathcal{C} \rightarrow \mathcal{V}$ to label certain wires by hole objects. More precisely, we depict the action of $[-,=]$ on a pair of morphisms $(f,g)$ by
\[
[f,g] := \quad \tikzfigscale{1}{diagram_functor}.
\]
We write the natural isomorphism $\theta:\mathcal{C}(c,d) \cong \mathcal{V}(I_{\mathcal{V}}, [c,d])$ as
\[
\theta(f) := \quad \tikzfigscale{1}{diagram_state} \quad = \quad \tikzfigscale{1}{diagram_state_2}.
\]
We write the structural isomorphism $\mu^{\boxtimes}_{aa'bb'}: [a \otimes b, a' \otimes b'] \rightarrow [a,a'] \boxtimes [b,b']$ as
\[ \mu^{-1} := \quad \tikzfigscale{1}{diagram_mu_1} \qquad \mu := \quad \tikzfigscale{1}{diagram_mu_2}, \]
with inverse equations
\[ \tikzfigscale{1}{diagram_mu_3} \quad = \quad \tikzfigscale{1}{diagram_mu_4}, \qquad \tikzfigscale{1}{diagram_mu_5} \quad = \quad \tikzfigscale{1}{diagram_mu_6}, \]
and naturality displayed by
\[ \tikzfigscale{1}{diagram_mu_natural_1} \quad = \quad \tikzfigscale{1}{diagram_mu_natural_2}. \]
The unit state $\theta(1_{I_{\mathcal{C}}}):I_{\mathcal{V}} \rightarrow [I_{\mathcal{C}},I_{\mathcal{C}}]$ is also an isomorphism; we depict its inverse as
\[ \theta(1_{I_{\mathcal{C}}})^{-1} := \quad \tikzfigscale{1}{diagram_state_flip} \]
so that
\[ \tikzfigscale{1}{diagram_state_slip_eq} \quad = \quad 1. \]
The axioms used repeatedly later are the splitting law
\[ \tikzfigscale{1}{diagram_mu_id_1} \quad = \quad \tikzfigscale{1}{diagram_mu_id_2}, \]
associativity for splitting
\[ \tikzfigscale{1}{diagram_mu_ass_1} \quad = \quad \tikzfigscale{1}{diagram_mu_ass_2}, \]
and unitality for splitting
\[ \tikzfigscale{1}{diagram_mu_unit_1} \quad = \quad \tikzfigscale{1}{diagram_mu_unit_2} \quad = \quad \tikzfigscale{1}{diagram_mu_unit_3}. \]

A monoidal su-functor can then be expressed graphically using the following functor-box notation, subject to the additional properties
\[ \tikzfigscale{1}{functor_box_1} \quad = \quad \tikzfigscale{1}{functor_box_2} \qquad \tikzfigscale{1}{functor_box_3} \quad = \quad \tikzfigscale{1}{functor_box_4}. \]

\section{A universal property for coend optics}
In what follows we fix a symmetric monoidal category $\mathcal{C}$ and work in $\mathbf{MonSuCat}_{\mathcal{C}}$, so every $1$-cell acts as the identity on the lower-order category $\mathcal{C}$. We therefore write an object simply as $\mathcal{V}$ when no confusion can arise.

We can now state the second main theorem of the paper, which identifies coend optics as the universal minimal theory of single-hole contexts over $\mathcal{C}$.
\begin{theorem}[Initiality of optics]\label{thm:optic-initiality}
$\mathbf{Optic}[\mathcal{C}]$ is a $2$-initial object in $\mathbf{MonSuCat}_{\mathcal{C}}$.
\end{theorem}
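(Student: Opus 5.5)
The plan is to prove $2$-initiality in the usual sense. For every monoidal su-category $\mathcal{V}$ over $\mathcal{C}$, I will construct a monoidal su-functor $(\mathcal{F}_{\mathcal{V}},\mu^{\mathcal{F}}):\mathbf{Optic}[\mathcal{C}]\to\mathcal{V}$. I will then show that for any two monoidal su-functors $\mathcal{F},\mathcal{G}:\mathbf{Optic}[\mathcal{C}]\to\mathcal{V}$ there is exactly one monoidal su-natural transformation $\mathcal{F}\Rightarrow\mathcal{G}$, and that it is invertible. Equivalently, each hom-category $\mathbf{MonSuCat}_{\mathcal{C}}(\mathbf{Optic}[\mathcal{C}],\mathcal{V})$ is equivalent to the terminal category.

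\emph{Existence.} On objects set $\mathcal{F}(a,a') := [a,a']$. On an optic represented by $(m,\ f:b\to m\otimes a,\ g:m\otimes a'\to b')$, define $\mathcal{F}(f,g):[a,a']\to[b,b']$ as the composite
\[ [a,a'] \xrightarrow{\lambda^{-1}} I_{\mathcal{V}}\boxtimes[a,a'] \xrightarrow{\theta(1_m)\boxtimes 1} [m,m]\boxtimes[a,a'] \xrightarrow{\zeta} [m\otimes a,m\otimes a'] \xrightarrow{[f,g]} [b,b']. \]
The first check is that this respects the coend. For $u:m\to n$, naturality of $\theta$ gives $[u,1]\,\theta(1_n)=\theta(u)=[1,u]\,\theta(1_m)$. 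Naturality of $\zeta$ in its first pair of arguments then transports $[1\otimes u,1]$ and $[1,1\otimes u]$ past $\zeta$, so the two sides of the sliding relation agree.

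For identities, the identity optic has $m=I_{\mathcal{C}}$, and the unitor coherence axiom collapses $\zeta\circ(\theta(1_I)\boxtimes 1)\circ\lambda^{-1}$ to $[\lambda,\lambda^{-1}]$, which is cancelled by the unitors in the representative.

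For composition, composing optics with memories $m$ and $n$ produces memory $n\otimes m$. Here I will use the splitting law, $\zeta\circ(\theta(1_n)\boxtimes\theta(1_m))\circ\lambda^{-1}=\theta(1_{n\otimes m})$, together with the associator coherence axiom to merge the two stacked $\zeta$'s into one.

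For the monoidal structure I take $\phi^{\mathcal{F}}:=\zeta_{aa'bb'}$ and $u^{\mathcal{F}}:=\theta(1_{I_{\mathcal{C}}})$. The unit is invertible, and $\mu^{\mathcal{F}}$ is the identity, since in $\mathbf{Optic}[\mathcal{C}]$ the hole functor is the identity on objects and the structural map $\mu$ is the identity. Monoidality of $\mathcal{F}$ on a tensor of optics requires shuffling $m\otimes a\otimes n\otimes b$ into $(m\otimes n)\otimes(a\otimes b)$. This uses the symmetry of $\mathcal{C}$ inside the representative, plus splitting and associativity to identify the result with $\zeta\circ(\mathcal{F}(f,g)\boxtimes\mathcal{F}(f',g'))\circ\zeta^{-1}$. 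With these choices the two su-functor diagrams of Theorem~\ref{thm:monsucat-2cat} hold essentially by definition.

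\emph{Uniqueness.} Let $\mathcal{F},\mathcal{G}$ be arbitrary monoidal su-functors out of $\mathbf{Optic}[\mathcal{C}]$. The triangle condition on $2$-cells forces $\eta_{(a,a')}=\mu^{\mathcal{G}}_{a,a'}\circ(\mu^{\mathcal{F}}_{a,a'})^{-1}$. This gives uniqueness at once, and it also gives invertibility, provided $\eta$ is a genuine monoidal su-natural transformation. Monoidality of $\eta$ follows from the second su-functor diagram applied to both $\mathcal{F}$ and $\mathcal{G}$, together with the unit diagram.

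Naturality of $\eta$ is where the content lies. I will first factor every optic in $\mathbf{Optic}[\mathcal{C}]$ as
\[ [f,g]_{\cong}\circ\bigl(s_m\otimes 1_{(a,a')}\bigr)\circ\lambda^{-1}, \]
where $s_m:(I,I)\to(m,m)$ is the state corresponding to $1_m$. It then suffices to check naturality on three kinds of morphism. For maps of the form $[f,g]_{\cong}$, naturality follows from naturality of $\mu^{\mathcal{F}}$ and $\mu^{\mathcal{G}}$. For tensors and unitors, it follows from the monoidal compatibilities. For the states $s_m$, it follows from a lemma still to be proved.

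I expect this lemma to be the main obstacle. It says that any monoidal su-functor preserves states:
\[ \mu^{\mathcal{F}}_{c,d}\circ\theta(h)=\mathcal{F}(\theta(h))\circ u^{\mathcal{F}} \qquad\text{for all } h:c\to d. \]
The first su-functor diagram gives this only for $h=1_{I_{\mathcal{C}}}$. My first attempt will write $\theta(h)$ through splitting as
\[ \theta(h)=\zeta\circ\bigl(\theta(1_I)\boxtimes\theta(h)\bigr)\circ\lambda^{-1}, \]
and then push $\mu^{\mathcal{F}}$ through $\zeta$ using the second diagram. Failing that, I will restrict the functors under consideration so that state preservation holds: the first diagram may be intended to encode exactly this condition, and $\mathbf{Optic}[\mathcal{C}]$ itself, whose states are in bijection with $\mathcal{C}$, suggests it is the right requirement.
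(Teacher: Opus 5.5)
Your construction of $\mathcal{O}_{\mathcal{V}}$ (with $u=\theta(1_{I_{\mathcal{C}}})$, $\mu=1$, $\phi=\zeta$) and your uniqueness argument via the su-naturality triangle coincide with the paper's proof. The gap is the lemma you postpone: it cannot be proved from the axioms as stated.

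\textbf{State preservation.} Your first attempt is circular. The tensor diagram only says how $\mu^{\mathcal{F}}$ commutes with $\zeta$. Pushing $\mu^{\mathcal{F}}$ through $\zeta\circ(\theta(1_I)\boxtimes\theta(h))$ therefore returns $\mathcal{F}(\zeta)\circ\phi^{\mathcal{F}}\circ\bigl(\mu^{\mathcal{F}}\theta(1_I)\boxtimes\mu^{\mathcal{F}}\theta(h)\bigr)$, with the same unknown. In fact the lemma is independent of the two su-functor diagrams. Take a non-identity monoidal natural automorphism $\kappa$ of $1_{\mathcal{C}}$ (e.g.\ the parity involution on super vector spaces), and set $\mu_{a,a'}:=[\kappa_a,1_{a'}]$ on $\mathcal{V}=\mathbf{Optic}[\mathcal{C}]$. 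Naturality of $\kappa$ and $\zeta$, together with $\kappa_I=1$ and $\kappa_{c\otimes d}=\kappa_c\otimes\kappa_d$, make $(\mathrm{Id},\mu)$ a monoidal su-functor with identity monoidal structure. Yet naturality of $\theta$ gives $\mu_{c,c}\circ\theta(1_c)=\theta(\kappa_c)\neq\theta(1_c)$. Now $(\mathrm{Id},1)$ is $\mathcal{O}_{\mathbf{Optic}[\mathcal{C}]}$, and the triangle forces any $2$-cell $(\mathrm{Id},1)\Rightarrow(\mathrm{Id},\mu)$ to have components $[\kappa_c,1_{c'}]$. These are not natural with respect to the optic $\theta(1_c):(I,I)\to(c,c)$, so no $2$-cell exists. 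Your fallback is therefore not optional: a $1$-cell must satisfy $\mu^{\mathcal{G}}\circ\theta(h)=\mathcal{G}(\theta(h))\circ u^{\mathcal{G}}$ for all $h$ (equivalently, by naturality, for all $h=1_m$). The first diagram of Theorem~\ref{thm:monsucat-2cat} imposes this only for $m=I_{\mathcal{C}}$. The paper gets past this point with the graphical step ``using su-functoriality of $\mathcal{G}$''. By the example above, that step is valid only if every state $\theta(1_m)$ may be moved into the functor box, i.e.\ only under this strengthened axiom.

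\textbf{Monoidality of $\mathcal{O}_{\mathcal{V}}$.} There is a second gap in your existence part, which the paper does not address either: it checks only the two su-functor diagrams for $\mathcal{O}_{\mathcal{V}}$, never naturality of $\phi^{\mathcal{O}}=\zeta$. Using the symmetry of $\mathcal{C}$ ``inside the representative'' only permutes wires of $\mathcal{C}$. To compare $\mathcal{O}_{\mathcal{V}}(\omega\otimes\omega')\circ\zeta$ with $\zeta\circ(\mathcal{O}_{\mathcal{V}}(\omega)\boxtimes\mathcal{O}_{\mathcal{V}}(\omega'))$, you must move the memory state $\theta(1_n)$ past $[a,a']$ inside $\mathcal{V}$. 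Already for $\omega=1_{(a,a')}$ and $\omega'=\theta(1_n)$ this means you need
\[
[\sigma_{n,a},\sigma_{a',n}]\circ\zeta_{aa'nn}\circ\bigl(1_{[a,a']}\boxtimes\theta(1_n)\bigr)\circ\rho^{-1}_{[a,a']}=\zeta_{nnaa'}\circ\bigl(\theta(1_n)\boxtimes 1_{[a,a']}\bigr)\circ\lambda^{-1}_{[a,a']},
\]
and no axiom of a monoidal su-category mentions the symmetry of $\mathcal{C}$. Concretely, let $\mathcal{C}$ be super vector spaces with the Koszul symmetry. The optic category built from the same monoidal category but with the sign-free swap satisfies every su-axiom. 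There, however, the left-hand side is the optic with memory $n$ whose legs are the sign operators $(-1)^{|n||a|}$ and $(-1)^{|n||a'|}$. To see it differs from the right-hand side, let both act via the sliding-invariant assignment $\phi\mapsto(g\otimes 1)(1_n\otimes\phi)(f\otimes 1)$ on an even $\phi:a\otimes x\to a'\otimes x'$ sending an odd part of $a$ to an even part of $a'$, with $n$ odd; the results differ by a sign. So you also need an axiom tying $\mu^{\boxtimes}$ to symmetries, e.g.\ $\mathcal{V}$ symmetric with $\mu^{\boxtimes}\circ[\sigma,\sigma]=\sigma^{\mathcal{V}}\circ\mu^{\boxtimes}$. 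The display then follows from naturality of $\sigma^{\mathcal{V}}$ and $\sigma^{\mathcal{V}}\circ\rho^{-1}=\lambda^{-1}$.
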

\begin{proof}
Define a functor
\[\mathcal{O}_{\mathcal{V}}:\mathbf{Optic}[\mathcal{C}] \longrightarrow \mathcal{V}\]
on objects by \[\mathcal{O}_{\mathcal{V}}(a,a') := [a,a'].\] For a representative $(f,g,m)$ of an optic $(a,a') \rightarrow (b,b')$, with \[ f:b \rightarrow m \otimes a, \qquad g:m \otimes a' \rightarrow b', \] define the corresponding morphism in $\mathcal{V}$ by the graphical composite \[ \mathcal{O}_{m}(f,g) := \quad \tikzfigscale{1}{optic_def}. \]

To see that this assignment descends to the coend, first verify the corresponding commutative diagram for the cowedge:
\[\begin{tikzcd}
	{\mathcal{C}(b,z \otimes a) \times \mathcal{C}(z' \otimes a', b')} && {\mathcal{C}(b,z' \otimes a) \times \mathcal{C}(z' \otimes a', b')} \\
	{\mathcal{C}(b,z \otimes a) \times \mathcal{C}(z \otimes a', b')} && {\mathcal{V}([a , a'],[b, b'])}
	\arrow["{\mathcal{C}(b,u \otimes 1_a) \times 1}", from=1-1, to=1-3]
	\arrow["{1 \times \mathcal{C}(u \otimes 1_{a'}, b')}"', from=1-1, to=2-1]
	\arrow["{\mathcal{O}_{(a,a'),(b,b'),z'}}", from=1-3, to=2-3]
	\arrow["{\mathcal{O}_{(a,a'),(b,b'),z}}"', from=2-1, to=2-3]
\end{tikzcd}\]
and hence obtain the unique morphism
\[
\int^{z \in \mathcal{C}}\mathcal{C}(b,z \otimes a) \times \mathcal{C}(z \otimes a', b')
\longrightarrow
\mathcal{V}([a , a'],[b, b']).
\]
In string-diagrammatic language, this is precisely the sliding equation
\[
\tikzfigscale{1}{string_optic_1}
\quad = \quad
\tikzfigscale{1}{string_optic_2}.
\]

This defines $\mathcal{O}_{\mathcal{V}}$ on hom-sets. We now check that it is functorial. For preservation of the identity, the relevant commutative diagram reads:
\[\begin{tikzcd}
	{[c,c']} && {[c,c']} \\
	& {I_{\mathcal{V}} \boxtimes[c,c']} \\
	& {[I_{\mathcal{C}},I_{\mathcal{C}}] \boxtimes[c,c']} \\
	& {[I_{\mathcal{C}} \otimes c,I_{\mathcal{C}} \otimes c']} \\
	{[c,c']} && {[c,c']}
	\arrow["{1_{[c,c']}}", from=1-1, to=1-3]
	\arrow["{\lambda_{[c,c']}}"{description}, from=1-1, to=2-2]
	\arrow["{\mathcal{O}_{\mathcal{V}}(1_{(c,c')})}"', curve={height=30pt}, from=1-1, to=5-1]
	\arrow["{\lambda_{[c,c']}}"{description}, from=1-3, to=2-2]
	\arrow["{1_{[c,c']}}", curve={height=-30pt}, from=1-3, to=5-3]
	\arrow["{\theta(1_{I_{\mathcal{C}}}) \boxtimes 1_{[c,c']}}"', from=2-2, to=3-2]
	\arrow["{\mu^{\boxtimes}_{(I_{\mathcal{C}},c),(I_{\mathcal{C}},c')}}"', from=3-2, to=4-2]
	\arrow["{[\lambda_c, \lambda_{c'}]}"{description}, from=4-2, to=5-1]
	\arrow["{[\lambda_c, \lambda_{c'}]}"{description}, from=4-2, to=5-3]
	\arrow["{1_{[c,c']}}"', from=5-1, to=5-3]
\end{tikzcd}\]
Diagrammatically, this simply reads
\[\tikzfigscale{1}{diagram_mu_unit_1_flip} \quad = \quad \tikzfigscale{1}{diagram_mu_unit_2}. \]

For preservation of composition, let $\omega:(c,c') \rightarrow (d,d')$ be represented by $(f,g,z)$ and let $\omega':(d,d') \rightarrow (e,e')$ be represented by $(f',g',y)$. Their composite coend optic has auxiliary object $y \otimes z$ given concretely by
\[ e \xrightarrow{f'} y \otimes d \xrightarrow{1_y \otimes f} y \otimes (z \otimes c) \xrightarrow{\alpha^{-1}_{yzc}} (y \otimes z) \otimes c \]
and
\[ (y \otimes z) \otimes c' \xrightarrow{\alpha_{yzc'}} y \otimes (z \otimes c') \xrightarrow{1_y \otimes g} y \otimes d' \xrightarrow{g'} e'. \]
The commutative-diagram verification of \[ \mathcal{O}_{\mathcal{V}}(\omega' \circ \omega) = \mathcal{O}_{\mathcal{V}}(\omega') \circ \mathcal{O}_{\mathcal{V}}(\omega) \] is:
\[
\begin{adjustbox}{max width=\textwidth}
\begin{tikzcd}
	{[c,c']} &&&&&&& {[c,c']} & \\
	\\
	&&& {I_{\mathcal{V}} \boxtimes[c,c']} &&& {[y \otimes z,y \otimes z] \boxtimes[c,c']} \\
	&&&&& {([y,y] \boxtimes[z,z]) \boxtimes[c,c']} \\
	& {[z,z] \boxtimes[c,c']} && {(I_{\mathcal{V}} \boxtimes I_{\mathcal{V}}) \boxtimes[c,c']} \\
	&&&&& {[y,y]\boxtimes ([z,z] \boxtimes[c,c'])} & {[(y \otimes z) \otimes c,(y \otimes z) \otimes c']} \\
	&&& {(I_{\mathcal{V}} \boxtimes[z,z]) \boxtimes[c,c']} \\
	& {[z \otimes c,z \otimes c']} &&&& {[y,y]\boxtimes [z \otimes c,z \otimes c']} \\
	&&& {I_{\mathcal{V}} \boxtimes ([z,z] \boxtimes[c,c'])} \\
	&&&&&& {[y \otimes (z \otimes c),y \otimes (z \otimes c')]} \\
	&&& {I_{\mathcal{V}} \boxtimes[d,d']} && {[y,y] \boxtimes[d,d']} \\
	&&&&&& {[y \otimes d,y \otimes d']} && {[e,e']} \\
	&&& {I_{\mathcal{V}} \boxtimes[d,d']} \\
	& {[d,d']}
	\arrow["{1_{[c,c']}}", from=1-1, to=1-8]
	\arrow["{\lambda_{[c,c']}}"{description}, from=1-1, to=3-4]
	\arrow["{\mathcal{O}_{\mathcal{V}}(f,g)}"', curve={height=30pt}, from=1-1, to=14-2]
	\arrow["{\lambda_{[c,c']}}"{description}, from=1-8, to=3-4]
	\arrow["{\mathcal{O}_{\mathcal{V}}(\omega' \circ \omega)}"{description}, curve={height=-30pt}, from=1-8, to=12-9]
	\arrow["{\theta(1_{y \otimes z})}"{description}, from=3-4, to=3-7]
	\arrow["{\theta(1_{z}) \boxtimes 1_{[c,c']}}"', from=3-4, to=5-2]
	\arrow["{\lambda^{-1}_{I_{\mathcal{V}}} \boxtimes 1_{[c,c']}}", from=3-4, to=5-4]
	\arrow["{\zeta_{(y \otimes z)(y \otimes z)cc'}}"{description}, from=3-7, to=6-7]
	\arrow["{\zeta_{yyzz} \boxtimes 1_{[c,c']}}"{description}, from=4-6, to=3-7]
	\arrow["{\alpha_{[y,y][z,z][c,c']}}"{description}, from=4-6, to=6-6]
	\arrow["{\lambda^{-1}_{[z,z]} \boxtimes 1_{[c,c']}}"{description}, from=5-2, to=7-4]
	\arrow["{\zeta_{zzcc'}}"', from=5-2, to=8-2]
	\arrow["{(\theta(1_{y}) \boxtimes \theta(1_{z})) \boxtimes 1_{[c,c']}}"{description}, from=5-4, to=4-6]
	\arrow["{(1_{I_{\mathcal{V}}} \boxtimes \theta(1_{z})) \boxtimes 1_{[c,c']}}"{description}, from=5-4, to=7-4]
	\arrow["{1_{[y,y]} \boxtimes \zeta_{zzcc'}}", from=6-6, to=8-6]
	\arrow["{[\alpha_{yzc}^{-1} ,\alpha_{yzc'} ]}"{description}, from=6-7, to=10-7]
	\arrow["{[(1_y \otimes f)\circ f',\, g' \circ (1_y \otimes g)]}"{description}, from=6-7, to=12-9]
	\arrow["{(\theta(1_{y}) \boxtimes 1_{[z,z]}) \boxtimes 1_{[c,c']}}"{description}, from=7-4, to=4-6]
	\arrow["{\alpha_{I_{\mathcal{V}}[z,z][c,c']}}"{description}, from=7-4, to=9-4]
	\arrow["{\lambda^{-1}_{[z \otimes c,z \otimes c']}}"{description}, from=8-2, to=11-4]
	\arrow["{[f, g]}"{description}, from=8-2, to=14-2]
	\arrow["{\zeta_{yy(z \otimes c)(z \otimes c')}}"{description}, from=8-6, to=10-7]
	\arrow["{1_{[y,y]} \boxtimes [f,g]}"{description}, from=8-6, to=11-6]
	\arrow["{\theta(1_{y}) \boxtimes 1_{[z,z] \boxtimes[c,c']}}"{description}, from=9-4, to=6-6]
	\arrow["{1_{I_{\mathcal{V}}} \boxtimes \zeta_{zzcc'}}"{description}, from=9-4, to=11-4]
	\arrow["{[1_y \otimes f, 1_y \otimes g]}"', from=10-7, to=12-7]
	\arrow["{\theta(1_{y}) \boxtimes 1_{[z \otimes c,z \otimes c']}}"{description}, from=11-4, to=8-6]
	\arrow["{1_{I_{\mathcal{V}}} \boxtimes [f,g]}"{description}, from=11-4, to=13-4]
	\arrow["{\zeta_{yydd'}}"{description}, from=11-6, to=12-7]
	\arrow["{[f', g']}"{description}, from=12-7, to=12-9]
	\arrow["{\theta(1_{y}) \boxtimes 1_{[d,d']}}"{description}, from=13-4, to=11-6]
	\arrow["{\mathcal{O}_{\mathcal{V}}(f',g')}"{description}, curve={height=30pt}, from=14-2, to=12-9]
	\arrow["{\lambda_{[d,d']}}"{description}, from=14-2, to=13-4]
\end{tikzcd}
\end{adjustbox}
\]
which is more intuitive diagrammatically:
\[\begin{adjustbox}{max width=\textwidth} $\tikzfigscale{0.75}{optic_func_1} \quad = \quad \tikzfigscale{0.75}{optic_func_2} \quad = \quad \tikzfigscale{0.75}{optic_func_3} \quad = \quad \tikzfigscale{0.75}{optic_func_4}.$ \end{adjustbox} \]
To equip this with the structure of a monoidal su-functor, take
\[u^{\mathcal{O}} = \theta(1_{I_{\mathcal{C}}}) := \quad \tikzfigscale{1}{diagram_unit_state}\]
along with $\mu^{\mathcal{O}}_{a,a'} := 1_{[a,a']}$ and $\phi^{\mathcal{O}}_{[c,c'],[d,d']} := \zeta_{cc'dd'}$. The required coherence between $\mu^{\mathcal{O}}$ and $u^{\mathcal{O}}$ trivialises to
\[
\begin{tikzcd}
	{I_{\mathcal{V}}} &  &  & {[I_{\mathcal{C}},I_{\mathcal{C}}]} \\
	{[I_{\mathcal{C}},I_{\mathcal{C}}]} &  &  & {[I_{\mathcal{C}},I_{\mathcal{C}}]}
	\arrow["{\theta(1_{I_{\mathcal{C}}})}", from=1-1, to=1-4]
	\arrow["{u^{\mathcal{O}} = \theta(1_{I_{\mathcal{C}}})}"', from=1-1, to=2-1]
	\arrow["{\mathcal{O}_{\mathcal{V}}(1_{(I_{\mathcal{C}},I_{\mathcal{C}})}) = 1_{[I_{\mathcal{C}},I_{\mathcal{C}}]}}"', from=2-1, to=2-4]
	\arrow["{\mu^{\mathcal{O}}_{I_{\mathcal{C}},I_{\mathcal{C}}} = 1_{[I_{\mathcal{C}},I_{\mathcal{C}}]}}", from=1-4, to=2-4]
\end{tikzcd}
\]
and the tensor coherence trivialises to
\[\begin{tikzcd}
		{[c \otimes d , c' \otimes d']} && {[c, c'] \boxtimes [d,d']} \\
	\\
	{[c \otimes d , c' \otimes d']} && {[c, c'] \boxtimes [d,d']} \\
	\\
	& {[c \otimes d , c' \otimes d']}
	\arrow["{\mu^{\mathcal{O}}_{c \otimes d,c' \otimes d'} = 1}"', from=1-1, to=3-1]
	\arrow["{\zeta_{cc'dd'}}"{description}, from=1-3, to=1-1]
	\arrow["{\mu^{\mathcal{O}}_{c,c'} \boxtimes \mu^{\mathcal{O}}_{d,d'} = 1 \boxtimes 1}", from=1-3, to=3-3]
		\arrow["{\phi^{\mathcal{O}}_{[c,c'],[d,d']} = \zeta_{cc'dd'}}"{description}, from=3-3, to=5-2]
		\arrow["{\mathcal{O}_{\mathcal{V}}(\zeta_{cc'dd'}) = 1_{[c \otimes d , c' \otimes d']}}"{description}, from=5-2, to=3-1]
\end{tikzcd}\]
so $\mathcal{O}_{\mathcal{V}}$ is indeed a monoidal su-functor.

Now let $\mathcal{G}: \mathbf{Optic}[\mathcal{C}] \rightarrow \mathcal{V}$ be any other monoidal su-functor. Since $[a,a'] = \mathcal{O}_{\mathcal{V}}(a,a')$, the natural isomorphisms $\mu^{\mathcal{G}}_{a,a'}:[a,a'] \rightarrow \mathcal{G}(a,a')$ define a comparison $2$-cell (which is furthermore an isomorphism)
\[
\eta:\mathcal{O}_{\mathcal{V}} \Rightarrow \mathcal{G},
\qquad
\eta_{(a,a')} := \mu^{\mathcal{G}}_{a,a'}.
\]
Naturality is the requirement that for every optic $(f,g)$ the diagram
\[\begin{tikzcd}
		{[c,d]} & {\mathcal{G}(c,d)} \\
		{[c',d']} & {\mathcal{G}(c',d')}
	\arrow["{\mathcal{O}_{\mathcal{V}}(f,g)}"', from=1-1, to=2-1]
	\arrow["{\mu^{\mathcal{G}}_{c',d'}}"', from=2-1, to=2-2]
	\arrow["{\mu^{\mathcal{G}}_{c,d}}", from=1-1, to=1-2]
	\arrow["{\mathcal{G}(f,g)}", from=1-2, to=2-2]
\end{tikzcd}\]
commutes, and this may again be verified with string diagrams:
\[ \begin{adjustbox}{max width=\textwidth} $\tikzfigscale{1}{optic_g_0} \quad = \quad \tikzfigscale{1}{optic_g_1} \quad = \quad \tikzfigscale{1}{optic_g_2}.$ \end{adjustbox} \]
then, using su-functoriality of $\mathcal{G}$,
\[ \begin{adjustbox}{max width=\textwidth} $= \quad \tikzfigscale{1}{optic_g_3} \quad = \quad \tikzfigscale{1}{optic_g_4} \quad = \quad \tikzfigscale{1}{optic_g_5}.$ \end{adjustbox} \]
which is precisely the lower-left path of
\[
\tikzfigscale{1}{optic_g_6}.
\]
The monoidality of $\eta$ is the commutativity of
\[\begin{tikzcd}
	{[c,c'] \boxtimes [d,d']} && {[c \otimes d,c' \otimes d']} \\
	& {\mathcal{G}((c,c') \otimes (d,d'))} \\
	{\mathcal{G}[c,c'] \boxtimes \mathcal{G}[d,d']} && {\mathcal{G}([c \otimes d, c' \otimes d'])}
	\arrow["{\zeta_{cdc'd'}}", from=1-1, to=1-3]
	\arrow["{\mu^{\mathcal{G}}_{cc'} \boxtimes \mu^{\mathcal{G}}_{dd'}}"', from=1-1, to=3-1]
	\arrow["{\mu^{\mathcal{G}}_{c \otimes d,c' \otimes d'}}", from=1-3, to=3-3]
	\arrow["{\mathcal{G}(1_{(c,c') \otimes (d,d')})}"{description}, from=2-2, to=3-3]
	\arrow["{\phi^{\mathcal{G}}_{[c,c'],[d,d']}}"{description}, from=3-1, to=2-2]
	\arrow["{\phi^{\mathcal{G}}_{[c,c'],[d,d']}}"', from=3-1, to=3-3]
\end{tikzcd}\]
which commutes by su-functoriality of $\mathcal{G}$.
The su-naturality triangle reads directly as
\[\begin{tikzcd}
	& {[c,d]} & \\
	{[c,d]} && {\mathcal{G}(c,d)}
	\arrow["{1_{[c,d]}}"', from=1-2, to=2-1]
	\arrow["{\mu^{\mathcal{G}}_{cd}}", from=1-2, to=2-3]
	\arrow["{\eta_{(c,d)} := \mu^{\mathcal{G}}_{cd}}"', from=2-1, to=2-3]
\end{tikzcd}.\]

For uniqueness, if
\[ \tau:\mathcal{O}_{\mathcal{V}} \Rightarrow \mathcal{G} \]
is any monoidal su-natural transformation, then for every $(a,a')$ one has
\[ \tau_{(a,a')} \circ \mu^{\mathcal{O}}_{a,a'} = \mu^{\mathcal{G}}_{a,a'}. \]
Since $\mu^{\mathcal{O}}_{a,a'} = 1_{[a,a']}$, it follows that
\[ \tau_{(a,a')} = \mu^{\mathcal{G}}_{a,a'} = \eta_{(a,a')}, \]
and so $\eta$ is the unique comparison $2$-cell, and $\mathbf{Optic}[\mathcal{C}]$ is $2$-initial in $\mathbf{MonSuCat}_{\mathcal{C}}$.
\end{proof}
This universal property for optics within theories of supermaps is closely related to the universal property for coend optics arising from the adjunction between (pro)duoidal categories and normal (pro)duoidal categories \cite{earnshaw2023produoidalalgebraprocessdecomposition}. For any symmetric monoidal category $\mathcal{C}$, the category $\mathcal{C}^{op} \times \mathcal{C}$ carries a canonical produoidal structure, and its normalisation under that adjunction is the category of coend optics. The result of \cite{earnshaw2023produoidalalgebraprocessdecomposition} therefore yields an initiality statement for coend optics in the corresponding comma category of produoidal functors out of $\mathcal{C}^{op} \times \mathcal{C}$\footnote{The authors are grateful to James Hefford for identifying, unpacking, and explaining this similarity.}.

Despite the conceptual similarity, the two setups differ in several important ways, suggesting that there may be a more general framework unifying them. On the produoidal side one obtains a laxator of the form $[a,a'] \boxtimes [b,b'] \rightarrow [a \otimes b , a' \otimes b']$, whereas monoidal su-categories are organised around the inverse splitting isomorphism $\mu^{\boxtimes} : [a \otimes b, a' \otimes b'] \rightarrow [a,a'] \boxtimes [b,b']$. Likewise, morphisms in the produoidal comma category preserve the hom-functor $[-,=]$ on the nose, while monoidal su-functors preserve the corresponding structure only up to coherent isomorphism. Finally, monoidal su-categories include the state/process isomorphism $\theta$, while the produoidal perspective keeps track of an additional \textit{sequencing} tensor product which in practice handles multi-input higher-order transformations. At a high level, monoidal su-categories focus on the completeness of higher-order transformations, whereas produoidal categories focus on their multi-hole aspect. Taken together, these perspectives suggest that in a categorical semantics for multi-hole higher-order transformations with completeness included, one could still reasonably hope to frame coend optics as suitably initial.



\section{Conclusion and Outlook}
Monoidal su-categories provide a categorical algebra of single-hole higher-order processes. The framework isolates the data needed to interpret a hole object $[a,a']$, the state/process correspondence $\theta$, and the splitting map $\mu^{\boxtimes}$ that makes local application to bipartite processes possible. Over a fixed symmetric monoidal base $\mathcal{C}$, these structures assemble into the $2$-category $\mathbf{MonSuCat}_{\mathcal{C}}$.


Several directions now seem especially promising:
\begin{itemize}
\item \textit{Coherence for monoidal su-categories.} Monoidal su-categories involve two interacting families of coherence, and we conjecture that every monoidal su-category is equivalent to one in which both the underlying category $\mathcal{C}$ and the higher-order category $\mathcal{V}$ are strictified along with the isomorphisms $\theta$ and $\mu$. Stating such a theorem will require extending the notion of monoidal su-functor to allow both the underlying and higher-order category to vary.
\item \textit{An expressive graphical language for higher-order quantum operations.} The ZX calculus equips traditional monoidal categories with gadgets (representing a core feature of quantum mechanics, the existence of complementary observables), that allow for expressive and automated graphical reasoning about correctness and compilation in first-order quantum protocols \cite{Duncan2019Graph-theoreticZX-calculus, Coecke2017PicturingReasoning}. By establishing a suitable replacement for the traditional circuit model, monoidal su-categories give a natural starting point for building an expressive graphical language for reasoning with higher-order quantum protocols.
\item \textit{Dynamical resource theories.} The notion of monoidal subcategory is used to formulate in categorical terms what it means to be a resource theory \cite{Coecke2014AResources}, and so the notion of monoidal su-subcategory suggests a clean categorical approach to dynamical resource theories \cite{GourDynamicalResources}, where the resources to be transformed are themselves processes.
\item \textit{Beyond the single-hole setting.} The next natural step is to extend the framework from single-hole to multi-input higher-order structure. The enrichment-based approach of \cite{wilson2023mathematical}, the polycategorical approach of \cite{wilson2022freepolycategoriesunitarysupermaps}, and recent logical/categorical work on duoidal and BV-style structure \cite{COCKETT1997133,SimmonsKissinger2022,hefford2025bvcategoryspacetimeinterventions, hirata_et_al:LIPIcs.LICS.2026.57, earnshaw2023produoidalalgebraprocessdecomposition} will inform further development towards a broader non-strict categorical semantics of multi-input higher-order processes.
\end{itemize}
Most broadly, the definition of monoidal su-categories captures succinctly a simple compositional idea in formal categorical algebra, providing a stable foundation on which more domain-specific tools can be built and through which cross-domain pollination between formal computer science and the foundations of quantum information theory can be facilitated.

\section{Acknowledgements}
The authors are grateful to James Hefford for useful discussions, particularly regarding the relationship between the universal property for optics established here and the adjunction between produoidal and normal-produoidal categories. 
Part of MW's contribution was carried out at UCL and part at CentraleSupélec. While based at University College London, MW was funded by the Engineering and Physical Sciences Research Council [grant number EP/W524335/1].

\bibliography{ref_local}

\appendix

\section{Detailed Proofs and Commutative Diagrams}
The main text uses string diagrams where they are more readable. Here we record the full proofs in terms of commutative diagrams where necessary for completeness.

\subsection[The 2-category MonSuCat over C]{The $2$-category $\mathbf{MonSuCat}_{\mathcal{C}}$}\label{app:monsucat-2cat}
We now give the full proof of Theorem~\ref{thm:monsucat-2cat}. Monoidal categories, together with monoidal functors and monoidal natural transformations, form the $2$-category $\mathbf{MonCat}$. Adding su-structure, one must check that the additional $\mu$-morphisms are closed under composition and that the corresponding coherence is preserved by units, vertical composition, and whiskering.

For closure under composition, suppose
\[
(\mathcal{F},\mu^{\mathcal{F}}):(\mathcal{V},\mathcal{C}) \rightarrow (\mathcal{V}',\mathcal{C})
\qquad\text{and}\qquad
(\mathcal{G},\mu^{\mathcal{G}}):(\mathcal{V}',\mathcal{C}) \rightarrow (\mathcal{V}'',\mathcal{C})
\]
are monoidal su-functors. We now check the following two diagrams:
\[\begin{tikzcd}
	{I_{\mathcal{V}''}} &&& {[I_{\mathcal{C}},I_{\mathcal{C}}]} \\
	& {\mathcal{G}(I_{\mathcal{V}'})} && {\mathcal{G}[I_{\mathcal{C}},I_{\mathcal{C}}]} \\
	{\mathcal{G}\mathcal{F}(I_{\mathcal{V}})} &&& {\mathcal{G}\mathcal{F}[I_{\mathcal{C}},I_{\mathcal{C}}]}
	\arrow["{\theta''(1_{I_{\mathcal{C}}})}", from=1-1, to=1-4]
	\arrow["{u^{\mathcal{G}}}"', from=1-1, to=2-2]
	\arrow["{u^{\mathcal{G}\mathcal{F}}}"', from=1-1, to=3-1]
	\arrow["{\mu^{\mathcal{G}}_{I_{\mathcal{C}},I_{\mathcal{C}}}}", from=1-4, to=2-4]
	\arrow["{\mathcal{G}(\theta'(1_{I_{\mathcal{C}}}))}", from=2-2, to=2-4]
	\arrow["{\mathcal{G}(u^{\mathcal{F}})}", from=2-2, to=3-1]
	\arrow["{\mathcal{G}(\mu^{\mathcal{F}}_{I_{\mathcal{C}},I_{\mathcal{C}}})}", from=2-4, to=3-4]
	\arrow["{\mathcal{G}\mathcal{F}(\theta(1_{I_{\mathcal{C}}}))}"', from=3-1, to=3-4]
\end{tikzcd},\]
and
\[
\begin{adjustbox}{max width=\textwidth}
\begin{tikzcd}
		{[c \otimes d , c' \otimes d']} &&& {[c, c'] \boxtimes [d,d']} \\
		& {\mathcal{G}[c \otimes d , c' \otimes d']} \\
		&& {\mathcal{G}[c, c'] \boxtimes \mathcal{G}[d,d']} \\
	{\mathcal{G}\mathcal{F}[c \otimes d , c' \otimes d']} & {\mathcal{G}([c, c'] \boxtimes [d,d'])} \\
	\\
	& {\mathcal{G}(\mathcal{F}[c, c'] \boxtimes \mathcal{F}[d,d'])} \\
	&& {\mathcal{G}\mathcal{F}[c, c'] \boxtimes \mathcal{G}\mathcal{F}[d,d']} \\
	& {\mathcal{G}\mathcal{F}([c, c'] \boxtimes [d,d'])}
	\arrow["{\mu^{\mathcal{G}}_{c \otimes d,c' \otimes d'}}"{description}, from=1-1, to=2-2]
	\arrow["{\mu^{\mathcal{G}\mathcal{F}}_{c \otimes d,c' \otimes d'}}"{description}, from=1-1, to=4-1]
	\arrow["{\zeta_{cc'dd'}}"{description}, from=1-4, to=1-1]
	\arrow["{\mu^{\mathcal{G}}_{c,c'} \boxtimes \mu^{\mathcal{G}}_{d,d'}}"{description}, from=1-4, to=3-3]
	\arrow["{\mu^{\mathcal{G}\mathcal{F}}_{c,c'} \boxtimes \mu^{\mathcal{G}\mathcal{F}}_{d,d'}}"{description}, curve={height=-30pt}, from=1-4, to=7-3]
	\arrow["{\mathcal{G}(\mu^{\mathcal{F}}_{c \otimes d,c' \otimes d'})}"{description}, from=2-2, to=4-1]
	\arrow["{\phi^{\mathcal{G}}_{[c,c'],[d,d']}}"{description}, from=3-3, to=4-2]
	\arrow["{\mathcal{G}(\mu^{\mathcal{F}}_{c,c'}) \boxtimes \mathcal{G}(\mu^{\mathcal{F}}_{d,d'})}"{description}, from=3-3, to=7-3]
	\arrow["{\mathcal{G}(\zeta_{cc'dd'})}"{description}, from=4-2, to=2-2]
	\arrow["{\mathcal{G}(\mu^{\mathcal{F}}_{c,c'} \boxtimes \mu^{\mathcal{F}}_{d,d'})}"{description}, from=4-2, to=6-2]
	\arrow["{\mathcal{G}(\phi^{\mathcal{F}}_{[c,c'],[d,d']})}"{description}, from=6-2, to=8-2]
	\arrow["{\phi^{\mathcal{G}}_{\mathcal{F}[c,c'],\mathcal{F}[d,d']}}"{description}, from=7-3, to=6-2]
		\arrow["{\phi^{\mathcal{G}\mathcal{F}}_{[c,c'],[d,d']}}"{description}, curve={height=-24pt}, from=7-3, to=8-2]
		\arrow["{\mathcal{G}\mathcal{F}(\zeta_{cc'dd'})}"{description}, curve={height=-30pt}, from=8-2, to=4-1]
\end{tikzcd}.
\end{adjustbox}
\]

For associativity of composition, note that
\[
\mu^{(\mathcal{H}\mathcal{G})\mathcal{F}}
=
\mathcal{H}\mathcal{G}(\mu^{\mathcal{F}})\circ \mu^{\mathcal{H}\mathcal{G}}
=
\mathcal{H}\mathcal{G}(\mu^{\mathcal{F}})\circ \mathcal{H}(\mu^{\mathcal{G}})\circ \mu^{\mathcal{H}}
=
\mathcal{H}(\mu^{\mathcal{G}\mathcal{F}})\circ \mu^{\mathcal{H}}
=
\mu^{\mathcal{H}(\mathcal{G}\mathcal{F})}.
\]
For the unit $1$-cell, one simply takes the identity functor equipped with the identity natural transformation $\mu^{\mathcal{I}}:[-,=]\Rightarrow [-,=]$, with the extra monoidal su-functor coherences satisfied immediately. For unitality of sequential composition, it is immediate that $\mu^{\mathcal{G}\mathcal{I}} = \mu^{\mathcal{G}}$ and $\mu^{\mathcal{I}\mathcal{F}} = \mu^{\mathcal{F}}$.

The identity natural transformation satisfies the required su-coherence by commutativity of
\[\begin{tikzcd}
	& {[c,c']} & \\
	{\mathcal{F}[c,c']} && {\mathcal{F}[c,c']}
	\arrow["{\mu^{\mathcal{F}}_{c,c'}}"', from=1-2, to=2-1]
	\arrow["{\mu^{\mathcal{F}}_{c,c'}}", from=1-2, to=2-3]
	\arrow["{1_{[c,c']}}"', from=2-1, to=2-3]
\end{tikzcd}.\]

Finally, one checks that the extra coherences required for monoidal su-natural transformations are preserved under sequential composition and whiskering. For sequential composition $\tau \circ \eta$ of natural transformations $\eta : \mathcal{F} \Rightarrow \mathcal{G}$ and $\tau : \mathcal{G} \Rightarrow \mathcal{H}$ we have commutativity of
\[\begin{tikzcd}
	&& {[c,c']} && \\
	{\mathcal{F}[c,c']} && {\mathcal{G}[c,c']} && {\mathcal{H}[c,c']}
	\arrow["{\mu^{\mathcal{F}}_{c,c'}}"', from=1-3, to=2-1]
	\arrow["{\mu^{\mathcal{G}}_{c,c'}}", from=1-3, to=2-3]
	\arrow["{\mu^{\mathcal{H}}_{c,c'}}", from=1-3, to=2-5]
	\arrow["{\eta_{[c,c']}}"', from=2-1, to=2-3]
	\arrow["{\tau_{[c,c']}}"', from=2-3, to=2-5]
\end{tikzcd},\]
whereas for whiskering $\eta \star \tau$ with $\eta : \mathcal{F} \Rightarrow \mathcal{F}'$ and $\tau : \mathcal{G} \Rightarrow \mathcal{G}'$ one may verify coherence for the composite by observing commutativity of
\[\begin{tikzcd}
	&&& {[c,c']} &&& \\
	\\
	&&& {\mathcal{G}[c,c']} \\
	&&&& {\mathcal{G}'[c,c']} \\
	&&& {\mathcal{G}\mathcal{F}'[c,c']} \\
	{\mathcal{G}\mathcal{F}[c,c']} &&&&&& {\mathcal{G}'\mathcal{F}'[c,c']}
	\arrow["{\mu^{\mathcal{G}}_{c,c'}}"', from=1-4, to=3-4]
	\arrow["{\mu^{\mathcal{G}\mathcal{F}}_{c,c'}}"', curve={height=30pt}, from=1-4, to=6-1]
	\arrow["{\mu^{\mathcal{G}'\mathcal{F}'}_{c,c'}}", curve={height=-30pt}, from=1-4, to=6-7]
	\arrow["{\tau_{[c,c']}}", from=3-4, to=4-5]
	\arrow["{\mathcal{G}(\mu^{\mathcal{F}'}_{c,c'})}"', from=3-4, to=5-4]
	\arrow["{\mathcal{G}(\mu^{\mathcal{F}}_{c,c'})}"', from=3-4, to=6-1]
	\arrow["{\mathcal{G}'(\mu^{\mathcal{F}'}_{c,c'})}"{description}, from=4-5, to=6-7]
	\arrow["{\tau_{\mathcal{F}'[c,c']}}", curve={height=18pt}, from=5-4, to=6-7]
	\arrow["{\mathcal{G}(\eta_{[c,c']})}", curve={height=18pt}, from=6-1, to=5-4]
	\arrow["{(\tau \star \eta)_{[c,c']}}"', curve={height=30pt}, from=6-1, to=6-7]
\end{tikzcd}\]


\section{Examples of Monoidal Su-Categories}

\begin{example}[Combs]\label{ex:combs}
Following \cite{Chiribella2009TheoreticalNetworks,hefford_coend}, the symmetric monoidal category $\mathbf{Comb}[\mathcal{C}]$ has the same objects as $\mathbf{Optic}[\mathcal{C}]$, namely pairs $(a,a')$, and morphisms
\[\mathbf{Comb}[\mathcal{C}]((a,a'),(b,b')):=
\mathbf{Optic}[\mathcal{C}]((a,a'),(b,b'))/\!\cong_{\mathrm{comb}},
\]
where two representatives $(f_1,g_1)$ and $(f_2,g_2)$ are considered equivalent when, for every pair of objects $x,x' \in \mathcal{C}$ and every morphism $\phi:a \otimes x \rightarrow a' \otimes x'$, the induced composites
\[b \otimes x \xrightarrow{f_1 \otimes 1_x} (m_1 \otimes a) \otimes x \cong m_1 \otimes (a \otimes x) \xrightarrow{1_m \otimes \phi} m_1 \otimes (a' \otimes x') \cong (m_1 \otimes a') \otimes x' \xrightarrow{g_1 \otimes 1_{x'}} b' \otimes x' \]
and the corresponding composite built from $(f_2,g_2)$ agree.
We can express this requirement graphically by
\[  \tikzfigscale{1}{optic_comp_1}   \  \cong_{\mathbf{Comb}}  \   \tikzfigscale{1}{optic_comp_2} \ \iff  \ \forall x,x', \phi : a \otimes x \rightarrow a' \otimes x' :  \  \tikzfigscale{1}{comb_def_1} \ = \  \tikzfigscale{1}{comb_def_2} .  \]
 Sequential and parallel composition are inherited directly from the corresponding composition rules in the category of optics.
\end{example}

For our next example we refer to a more abstract approach to defining supermaps, namely as locally applicable transformations \cite{wilson_locality,wilson2026supermapsgeneralisedtheories}, which are in fact morphisms between strong profunctors \cite{hefford_supermaps}.
We say that a locally-applicable transformation of type
\[S:(a,a') \longrightarrow (b,b')\]
is given by families of functions
\[S_{x,x'}:\mathcal{C}(a \otimes x,a' \otimes x') \longrightarrow \mathcal{C}(b \otimes x,b' \otimes x')\]
indexed by pairs $x,x' \in \mathcal{C}$ and natural over the category of coend-optics, meaning commuting with combs on the environment.
Graphically, we may denote a locally applicable transformation by
\[  \tikzfigscale{1}{slot_def_1}   \] 
with naturality condition expressed by
\[   \ \forall x,x', \phi, f,g : a \otimes x \rightarrow a' \otimes x' :  \  \tikzfigscale{1}{slot_def_2} \ = \  \tikzfigscale{1}{slot_def_3} .  \]
\begin{example}[Slots]\label{ex:slots}
Locally applicable transformations do not in general form monoidal categories, because the interchange law can fail \cite{wilson2022freepolycategoriesunitarysupermaps}. Slots are defined as locally applicable transformations satisfying a stronger locality condition, namely commutativity with any locally applicable transformation acting on the environment. Graphically, this condition can be expressed by
\[  \tikzfigscale{1}{slot_def_4} \ = \  \tikzfigscale{1}{slot_def_5} .  \]
The symmetric monoidal category $\mathbf{Slot}[\mathcal{C}]$ can be equipped with the structure of a symmetric monoidal su-category as follows.
\begin{itemize}
    \item The functor $[-,=]_{\cong}:\mathcal{C}^{op} \times \mathcal{C} \rightarrow \mathbf{Slot}[\mathcal{C}]$ is defined on objects by $[a,a'] = (a,a')$ and sends a pair $(f:b \rightarrow a,g:a' \rightarrow b')$ to the locally applicable transformation
    \[
    [f,g]_{x,x'}(\phi):=(g \otimes 1_{x'}) \circ \phi \circ (f \otimes 1_x).
    \]
    \item The natural isomorphism $\mathbf{Slot}[\mathcal{C}]((I_{\mathcal{C}},I_{\mathcal{C}}),(c,c')) \cong \mathcal{C}(c,c')$ arises by a Yoneda-like argument. Concretely, note that any locally applicable transformation of type $(I,I) \rightarrow (c,c')$ has the form $S_{xx'}(\phi) = S_{II}(1_{I}) \otimes \phi$ for some fixed morphism $S_{II}(1_I)$. This gives a way to construct a representative morphism for each locally applicable transformation. Conversely, given any morphism $f : c \rightarrow c'$, one may construct an associated natural transformation $S_{xx'}(\phi) := f \otimes \phi$. It is routine to check that these two constructions are mutually inverse to each other and natural in $c,c'$.
    \item The definition of the monoidal product in the category of slots is defined to be $(c,c') \otimes (d,d') := (c \otimes d, c' \otimes d')$.
\end{itemize}
The coherence axioms for monoidal su-categories are then checked directly.
\end{example}


\end{document}